 \newtheorem{thm}{Theorem}[section]
 \newtheorem{cor}[thm]{Corollary}
 \newtheorem{lem}[thm]{Lemma}
 \newtheorem{prop}[thm]{Proposition}
\theoremstyle{definition}%% Hans: erscheinen in gerader Schrift!
 \newtheorem{defn}[thm]{Definition}
 \newtheorem{rem}[thm]{Remark}
 \newtheorem{exa}[thm]{Example}
\newcommand{\HH}{{\mathbb H}}
\newcommand{\KK}{{\mathbb K}}
\newcommand{\LL}{{\mathbb L}}
\newcommand{\RR}{{\mathbb R}}
\newcommand{\Ccal}{{\mathcal  C}}
\newcommand{\Ecal}{{\mathcal  E}}
\newcommand{\Gcal}{{\mathcal  G}}
\newcommand{\Hcal}{{\mathcal  H}}
\newcommand{\Kcal}{{\mathcal  K}}
\newcommand{\Lcal}{{\mathcal  L}}
\newcommand{\Pcal}{{\mathcal  P}}
\newcommand{\Vcal}{{\mathcal  V}}
\newcommand{\Zcal}{{\mathcal  Z}}
\newcommand{\Cbf}{{\bm  C}}
\newcommand{\Fbf}{{\bm  F}}
\newcommand{\Pbf}{{\bm  P}}
\DeclareMathOperator{\spn}{{span}}%% \span geht nicht
\DeclareMathOperator{\PG}{{PG}}%% nur ausnahmsweise direkt verwenden
\DeclareMathOperator{\Char}{{Char}}
\newcommand{\PR}[1]{\PG(#1,\KK)}
\begin{document}
\title{Pencilled regular parallelisms}
\author{Hans Havlicek and Rolf Riesinger}
\date{\normalsize\emph{In memoriam Walter Benz}}%%\today
\maketitle
\thispagestyle{empty}

\begin{abstract}
Over any field $\KK$, there is a bijection between regular spreads of the
projective space $\PR3$ and $0$-secant lines of the Klein quadric in $\PR5$.
Under this bijection, regular parallelisms of $\PR3$ correspond to hyperflock
determining line sets (hfd line sets) with respect to the Klein quadric. An hfd
line set is defined to be \emph{pencilled} if it is composed of pencils of
lines. We present a construction of pencilled hfd line sets, which is then
shown to determine all such sets. Based on these results, we describe the
corresponding regular parallelisms. These are also termed as being
\emph{pencilled}. Any Clifford parallelism is regular and pencilled. From this,
we derive necessary and sufficient algebraic conditions for the existence of
pencilled hfd line sets.
\par
\emph{Mathematics Subject Classification:} 51A15 51M30
\par
\emph{Keywords:} pencilled regular parallelism; hyperflock determining line
    set; Clifford parallelism; linear flock
\end{abstract}

\section{Introduction}\label{se:Intro}

The topic of our research is \emph{parallelisms} in a three-dimensional
projective space $\PR3$, which we interpret as a point-line geometry
$(\Pcal_3,\Lcal_3)$ with point set $\Pcal_3$ and line set $\Lcal_3$; the ground
field $\KK$ is arbitrary. Recall that a \emph{spread} $\Ccal$ is a partition of
$\Pcal_3$ by (disjoint) lines, whereas \emph{parallelism} $\Pbf$ is a partition
of $\Lcal_3$ by (disjoint) spreads. A spread $\Ccal\in\Pbf$ is also called a
\emph{parallel class} of $\Pbf$. Parallelisms are known as \emph{packings},
when $\KK$ is a finite field. For further information about parallelisms we
refer to \cite{hirsch-85a}, \cite{john-03a}, \cite{karz+k-88a}, and the
exhaustive monograph \cite{john-10a}, the last being an indispensable source.
\par
It seems that there is little to say about parallelisms in general. So, in
order to obtain ``interesting'' results about parallelisms, it is common to
impose extra constraints, \emph{e.g.} by specifying the ground field or by
adding topological conditions. Recent contributions in this spirit are
\cite{betta-16a}, \cite{bett+l-17a}, and \cite{topa+z-16a}; see also the
references at the end of Section~\ref{se:prelim}. In the present article we are
concerned with \emph{regular parallelisms}, that is, parallelisms that are made
from regular spreads. We follow the terminology from \cite[Ch.~26]{john-10a},
that is, we drop the adverb ``totally'' appearing in \cite{bett+r-10a} and
several other articles. In Section~\ref{se:prelim} we recall a bijection
between regular parallelisms in $\PR3$ and \emph{hyperflock determining line
sets} (hfd line sets for short) in $\PR5$; the latter projective space is
always understood as the ambient space of the Klein quadric representing the
lines of $\PR3$. We make use of this bijection and confine ourselves to regular
parallelisms whose corresponding hfd line set is composed of pencils of lines.
Regular parallelisms and hfd line sets of this kind are said to be
\emph{pencilled}; see Definition~\ref{def:penunpen}. Examples of pencilled
regular parallelisms (with $\KK$ being the field $\RR$ of real numbers) can be
found in \cite{bett+r-10a}, even though the term ``pencilled'' does not appear
there. One of our aims is to unify these findings by creating a common basis.
Another aim is to develop the theory from its very beginning over an arbitrary
ground field rather than over the real numbers only.
\par
The article is organised as follows. We describe the necessary background and
definitions in Section~\ref{se:prelim}. Next, in Section~\ref{se:Main}, we
state the main results about pencilled hfd line sets and their corresponding
pencilled regular parallelisms. In order to get started, we establish a
construction of pencilled hfd line sets in Theorem~\ref{thm:konstr}. Then we
present an explicit description of all hfd line sets in the Main
Theorem~\ref{thm:main}. Theorem~\ref{thm:Existenz} provides necessary and
sufficient algebraic conditions in terms of $\KK$ for the existence of
pencilled regular parallelisms in $\PR3$. Also some examples are given and a
link with the classical Clifford parallelism is established. All proofs and
several auxiliary lemmas are postponed to Section~\ref{se:Proof}, which should
be read in consecutive order. The final sections \ref{se:descript} and
\ref{se:char=2} are devoted to the description of pencilled regular
parallelisms and to phenomena that arise only in case of characteristic two.

\section{Preliminaries}\label{se:prelim}

Throughout this paper we stick as close as possible to the notions and the
terminology in \cite{bett+r-14b}, even though we work over an arbitrary ground
$\KK$ field rather than over $\RR$. By $\lambda\colon \Lcal_3\to H_5$ we denote
Klein's correspondence of line geometry, whose image is the Klein quadric $H_5$
in $\PR5=(\Pcal_5,\Lcal_5)$. There is a widespread literature on this topic.
See \cite[Sect.~2]{bett+r-05a}, \cite[Sect.~2]{havl-16a} or
\cite[2.1]{knarr-95a} for a short introduction and
\cite[Sect.\,11.4]{blunck+he-05a}, \cite[Sect.\,15.4]{hirsch-85a},
\cite[Ch.\,34]{pick-76a} and \cite[Ch.\,xv]{semp+k-98a} for detailed
expositions.
\par
The polarity of $\PR5$ associated with $H_5$ is denoted by $\pi_5$. A
\emph{subquadric} of the Klein quadric is the section of $H_5$ by an
$r$-dimensional subspace of $\PR5$, $r\in\{-1,0,\ldots,5\}$; such a subquadric
will usually be denoted by some capital letter with lower index $r$. We are
mainly concerned with three kinds of subquadric. If $x\in H_5$, then $\pi_5(x)$
is a tangent hyperplane, which gives rise to the subquadric $H_5\cap\pi_5(x)$.
This subquadric is a quadratic cone with vertex $x$ and with projective index
$2$. If $x\in\Pcal_5\setminus H_5$, then $L_4:=H_5\cap\pi_5(x)$ is a regular
quadric with projective index $1$. Over the real numbers $L_4$ is known to be a
model for \emph{Lie circle geometry}, whence it is commonly referred to as the
\emph{Lie quadric} \cite[p.~155]{benz-07a}, \cite[p.~15]{cecil-08a}. We
maintain this name in the general case, even though there need not be any
relationship to circle geometry. Consequently, $L_4$ will be called a \emph{Lie
subquadric} of $H_5$. On the other hand, the points and lines of $L_4$
constitute one of the classical generalised quadrangles over any field $\KK$
\cite[p.~57]{vanm-98a}. If $S$ is a solid such that $Q_3:=S\cap H_5$ is a
regular quadric with projective index $0$, then $Q_3$ is said to be
\emph{elliptic}. Planes having empty intersection with $H_5$ play also an
essential role. Such planes are called \emph{zero planes} (\emph{e.g.} in
\cite{bett+r-10a}) or \emph{external planes} to the Klein quadric (\emph{e.g.}
in \cite{havl-16a}). We adopt the second terminology.
\par
The regular spreads in $\PR3$ correspond under $\lambda$ precisely to the
elliptic subquadrics of $H_5$. As a consequence, the $\lambda$-image of a
regular parallelism $\Pbf$ is a \emph{hyperflock} of the Klein quadric $H_5$,
that is, a partition of $H_5$ by (disjoint) elliptic subquadrics
\cite{bett+r-10a}. It has proved advantageous to replace such a hyperflock by
an equivalent object, namely a certain set of lines in the ambient space of the
Klein quadric \cite{bett+r-10a}, \cite[p.~69]{hirsch-85a}. This approach is
based on the following bijection $\gamma$ from the set $\Cbf$ of all regular
spreads of $\PR3$ onto the set $\Zcal$ of all $0$-secants (\emph{i.e.} external
lines) of $H_5$:
\begin{equation}\label{eq:Passantenabb}
    \gamma\colon \Cbf\rightarrow\Zcal \colon  \Ccal\mapsto
    \pi_5\bigl(\spn\lambda(\Ccal)\bigr)=:\gamma(\Ccal).
\end{equation}
The following results from \cite{bett+r-10a}, where $\KK=\RR$, are easily seen
to hold over an arbitrary ground field. By \cite[Thm.~1.3]{bett+r-10a}, the
$\gamma$-image of a regular parallelism $\Pbf$ of $\PR3$ is a \emph{hyperflock
determining line set} (hfd line set), that is, a set $\Hcal\subset\Lcal_5$ of
$0$-secants of the Klein quadric $H_5$ such that each tangent hyperplane of
$H_5$ contains exactly one line of $\Hcal$; cf.\ \cite[Def.~1.2]{bett+r-10a}.
Conversely, each hfd line set represents a regular parallelism, and thus the
construction of regular parallelisms of $\PR3$ is equivalent to the
construction of hfd line sets in $\PR5$ \cite[Thm.~1.3]{bett+r-10a}; see also
\cite{loew-16a}.
\par
An hfd line set $\Hcal$ allows us to read off and define properties of the
corresponding regular parallelism $\gamma^{-1}(\Hcal)$, for instance its
\emph{dimension} is simply the dimension of the subspace of $\PR5$ spanned by
the union of all lines in $\Hcal$.
\par
Given a point $p$ and an incident plane $\alpha$ in\/ $\PR{n}$, $n\in\{3,5\}$,
we write $\Lcal[p,\alpha]$ for the pencil of lines with vertex $p$ and carrier
plane $\alpha$. The crucial notion of the present article is as follows:

\begin{defn}\label{def:penunpen}
An hfd line set $\Hcal$ is said to be \emph{pencilled} if $\Hcal$ is composed
of line pencils, in other words, if each element of $\Hcal$ belongs to at least
one pencil of lines in $\Hcal$. A regular parallelism $\Pbf$ of $\PR3$ is
called \emph{pencilled} if the hfd line set $\gamma(\Pbf)$ is pencilled.
\end{defn}

The reader will easily check that the parallelisms constructed in
\cite{bett+r-10a} are pencilled; using \cite[Rem.~2.9]{bett+r-10a} one shows
that also the parallelisms from \cite{bett+r-05a} are pencilled. We observe
that over $\RR$ pencilled regular parallelisms of dimension $2$, $3$, $4$, and
$5$ are known. On the other hand, there exist also regular parallelisms that
are not pencilled \cite[Ex.~16~and~22]{bett+r-08b}. We shall establish in
Proposition~\ref{prop:const} that the \emph{Clifford parallelism\/} is a
pencilled regular parallelism. To this end we need some facts about Clifford
parallelism, which we briefly summarise below.
\par
The following is taken from \cite[\S\,14]{karz+k-88a}: Let $\KK$ be a field and
let $\HH$ be a $\KK$-algebra such that one of the subsequent conditions, (A) or
(B), is satisfied:
\begin{equation}\label{eq:A+B}
        \left.\mbox{
        \begin{tabular}{ll}
        (A) & $\HH$ is a quaternion skew field with centre\/ $\KK$.\\
        (B) & $\HH$ is an extension field of $\KK$ with degree $[\HH:\KK]=4$\\
            & and such that $a^2\in\KK$ for all $a\in\HH$.
        \end{tabular}
        }\right\}
\end{equation}
We now take $\HH$ as the underlying vector space of the projective space
$\PR3$. Every element $c\in\HH\setminus\{0\}$ determines the \emph{left
translation} $\lambda_c\colon \HH\to\HH\colon y\mapsto cy$. All left
translations $\HH\to\HH$ constitute a group, which acts on the line set
$\Lcal_3$ in a natural way. The orbits of this group action on $\Lcal_3$ are
defined to be the classes of \emph{left parallel} lines. In this way a first
parallelism is obtained. \emph{Right parallel} lines are defined via
\emph{right translations} and give rise to a second parallelism. These two
parallelisms turn $\PR3$ into a projective \emph{double space}; they coincide
precisely when (B) applies. Note also that (B) implies that the characteristic
of $\KK$ is two and that $\HH$ is a purely inseparable extension of $\KK$.
\par
More generally, a parallelism $\Pbf$ of an arbitrary projective space $\PR3$ is
said to be \emph{Clifford} if the underlying vector space of $\PR3$ can be made
into a $\KK$-algebra\/ $\HH$, subject to (A) or (B), in such a way that $\Pbf$
coincides with the left or right parallelism arising from $\HH$
\cite[Def.~3.4]{havl-16a}. We refer to \cite{bett+r-12a},
\cite{blunck+p+p-07a}, \cite{blunck+p+p-10a}, \cite{cogl-15a}, \cite{havl-15a},
\cite{havl-16a}, \cite[pp.\,112--115]{john-03a}, \cite[\S\,14]{karz+k-88a} and
\cite{loew-17a} for surveys, recent results, and a wealth of references on
Clifford parallelism.

\section{Main results and examples}\label{se:Main}

First, we present a construction of pencilled hfd line sets. We thereby
generalise and unify Theorems~5.1, 5.5, and 5.6 in \cite{bett+r-10a}. These
theorems are more explicit than our result, but tailored to real projective
spaces; see also \cite[Rem.~8.1]{bett+r-14b}.

\begin{thm}[\textbf{Construction of pencilled hfd line sets}]\label{thm:konstr}
In\/ $\PR5$, let $D$ be a line such that
\begin{equation}\label{eq:konstr.E_D}
    \Ecal_D:=\bigl\{\varepsilon\subset\Pcal_5\mid D\subset\varepsilon
    \mbox{ and }
    \varepsilon\mbox{ is an external plane to } H_5 \bigr\}
\end{equation}
is non-empty. Then, upon choosing any mapping $f\colon D\to\Ecal_D$, the union
\begin{equation}\label{eq:konstr}
    \bigcup_{v\in D} \Lcal[v,f(v)] =:\Hcal
\end{equation}
is a pencilled hfd line set.
\end{thm}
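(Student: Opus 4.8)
The plan is to verify directly that $\Hcal$ as defined in \eqref{eq:konstr} satisfies the defining properties of a pencilled hfd line set: first, that every line in $\Hcal$ is a $0$-secant of $H_5$; second, that each tangent hyperplane of $H_5$ contains exactly one line of $\Hcal$; and third, that $\Hcal$ is composed of line pencils. The last point is immediate from the construction, since $\Hcal$ is by definition the union of the pencils $\Lcal[v,f(v)]$, so each of its elements lies in such a pencil.

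For the first point, fix $v\in D$ and a line $\ell\in\Lcal[v,f(v)]$. Then $\ell$ is contained in the external plane $f(v)$, and since $f(v)\cap H_5=\emptyset$ by the definition of $\Ecal_D$ in \eqref{eq:konstr.E_D}, we get $\ell\cap H_5=\emptyset$ as well; hence $\ell$ is a $0$-secant. So $\Hcal\subset\Zcal$.

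The main work is the ``exactly one'' condition for tangent hyperplanes. Let $x\in H_5$ and consider the tangent hyperplane $\tau:=\pi_5(x)$. I would count incidences between $\tau$ and the pencils making up $\Hcal$. The key observation is that a pencil $\Lcal[v,\varepsilon]$ with $\varepsilon$ external has all its lines through the single point $v$; such a line lies in $\tau$ if and only if both $v\in\tau$ and the line lies in $\tau\cap\varepsilon$. Now $\tau$ is a hyperplane and $\varepsilon$ a plane not contained in $\tau$ (indeed $\varepsilon\not\subset\tau$ since $\varepsilon$ meets $H_5$ trivially while $\tau$ is tangent, so in particular $\varepsilon\cap\tau$ is a line), so $\tau\cap\varepsilon$ is a line, call it $m_v$. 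Thus $\Lcal[v,\varepsilon]$ contributes a line to $\tau$ precisely when $v\in m_v=\tau\cap\varepsilon$, equivalently when $v\in\tau$; and in that case it contributes the unique line $m_v=\tau\cap f(v)$. Therefore the lines of $\Hcal$ contained in $\tau$ are exactly the lines $\tau\cap f(v)$ for those $v\in D$ with $v\in\tau$.

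It remains to show that there is exactly one such $v$. Since $D$ is a line and $\tau$ a hyperplane in $\PR5$, either $D\subset\tau$ or $D\cap\tau$ is a single point. The first case cannot occur: if $D\subset\tau=\pi_5(x)$, pick any $v\in D$; then $D\subset f(v)$ forces $f(v)$ to meet $\tau$ in at least the line $D$, which is consistent, but more to the point one checks that $D\subset\pi_5(x)$ for a fixed $x\in H_5$ would have to hold simultaneously for \emph{all} $x\in H_5$ only if... — here I need to be careful. Actually the clean argument is this: if $D\subset\tau$, then for the point $v$ that would be $D\cap(\text{anything})$ we lose uniqueness, so I must rule out $D\subset\pi_5(x)$ for every $x\in H_5$. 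Equivalently, $\pi_5(D)$ (a solid, being the polar of a line) must meet $H_5$; since $\Ecal_D\ne\emptyset$, there is an external plane $\varepsilon\supset D$, and then $\varepsilon\subset\pi_5(D)^{?}$ — no. Let me instead argue that $D$ itself is a $0$-secant (it lies in the external plane $\varepsilon$), hence $D$ is not tangent to $H_5$ at any point, which is exactly the statement that $D\not\subset\pi_5(x)$ for all $x\in H_5$; indeed $D\subset\pi_5(x)$ with $x\in H_5$ would mean $x\in\pi_5(D)$ and $x\in H_5$, i.e. $x$ is a point of $H_5$ on the line $D^{\perp\perp}$... the precise incidence-geometry fact I will invoke is: a line $D$ with $D\cap H_5=\emptyset$ satisfies $x\notin D^{\pi_5}$ — more transparently, $D\subset\pi_5(x)\iff x\in\pi_5(D)$, and $\pi_5(D)$ is a solid whose intersection with $H_5$ is disjoint from... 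Here I will simply use that $D$ being a $0$-secant implies $D$ lies in no tangent hyperplane $\pi_5(x)$ with $x\in H_5\cap D$, but since $H_5\cap D=\emptyset$ the condition $D\subset\pi_5(x)$ for $x\in H_5$ reduces to $x\in\pi_5(D)\cap H_5$; and one shows $\pi_5(D)\cap H_5$ consists of points $x$ with $\pi_5(x)\supset D$. So I need: there is \emph{no} $x\in H_5$ with $D\subset\pi_5(x)$, i.e. $\pi_5(D)\cap H_5=\emptyset$. But $\pi_5(D)$ has dimension $3$ and contains no singular points iff $D$ is a $0$-secant whose polar solid is elliptic or... — \textbf{this is the point requiring genuine care}, and it is where I expect the main obstacle: translating ``$\Ecal_D\neq\emptyset$'' into the statement ``$\pi_5(D)\cap H_5=\emptyset$ is false'' in the right direction, or rather showing that $D\cap\tau$ is always a single point and never all of $D$. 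I would resolve it by noting that if $D\subset\pi_5(x)$ for some $x\in H_5$, then $x\in\pi_5(\varepsilon)\subset\pi_5(D)$ for the external plane $\varepsilon$; but $\pi_5(\varepsilon)$ is a plane and, $\varepsilon$ being external, $\varepsilon$ and $\pi_5(\varepsilon)$ are complementary with $\pi_5(\varepsilon)$ also external (the polar of an external plane w.r.t.\ $H_5$ is again external — a standard fact about the Klein quadric in $\PR5$), so $x\in\pi_5(\varepsilon)\cap H_5=\emptyset$, a contradiction. Hence $D\cap\tau$ is exactly one point $v_0$, the pencil $\Lcal[v_0,f(v_0)]$ contributes the unique line $\tau\cap f(v_0)$ to $\tau$, and no other pencil contributes anything. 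This establishes that $\tau$ contains exactly one line of $\Hcal$, completing the proof that $\Hcal$ is a pencilled hfd line set. $\qed$
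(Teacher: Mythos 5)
Your overall strategy (for each tangent hyperplane $\tau$, count which pencils $\Lcal[v,f(v)]$ contribute a line to $\tau$) is essentially the paper's, and your reduction is sound: $\Lcal[v,f(v)]$ contributes a line lying in $\tau$ exactly when $v\in\tau$, and then that line is $\tau\cap f(v)$. The genuine error is your claim that the case $D\subset\tau$ cannot occur. It does occur: since $D$ is a $0$-secant, its polar $\pi_5(D)$ is a solid that meets $H_5$ in a nonempty elliptic subquadric --- this intersection is precisely the $\lambda$-image of the regular spread $\gamma^{-1}(D)$, cf.\ \eqref{eq:Passantenabb_inv} --- and every point $x$ of that subquadric satisfies $x\in\pi_5(D)$, i.e.\ $D\subset\pi_5(x)$, so $\pi_5(x)$ is a tangent hyperplane containing $D$ (existence of such a $\tau$ also follows at once from Corollary~\ref{cor:Gerade}). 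Your purported contradiction rests on the inference ``$D\subset\pi_5(x)$, hence $x\in\pi_5(\varepsilon)$'': this is invalid, because $D\subset\pi_5(x)$ only gives $x\in\pi_5(D)$, and $\pi_5(\varepsilon)$ is a proper subspace (a plane) of the solid $\pi_5(D)$, so nothing places $x$ inside $\pi_5(\varepsilon)$.

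The gap is easy to repair inside your own framework, and this is exactly how the paper argues: if $D\subset\tau$, then for every $v\in D$ one has $D\subset\tau\cap f(v)$, and since $\tau\cap f(v)$ is a line this forces $\tau\cap f(v)=D$; hence all pencils contribute one and the same line, namely $D$, which indeed belongs to $\Hcal$ because $D\in\Lcal[v,f(v)]$ for every $v\in D$. So $\tau$ again contains exactly one line of $\Hcal$, and together with your case $D\cap\tau=\{p\}$ the hfd property follows. One smaller point: your claim that an external plane $\varepsilon$ is never contained in a tangent hyperplane (so that $\tau\cap\varepsilon$ is a line) is merely asserted; the paper proves it via Lemma~\ref{lem:A}, using that a tangent hyperplane contains a plane lying on $H_5$, which by a dimension count meets every plane contained in that hyperplane.
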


In $\PG(5,\RR$) there is always a line $D$ such that $\Ecal_D\neq \emptyset$;
see \cite[Sect.~5]{bett+r-10a}. Over an arbitrary field $\KK$ this need not be
the case. We shall return to this matter after Theorem~\ref{thm:Existenz}. So,
for the time being, it remains open whether or not there exists a line $D$ in
$\PR5$ such that $\Ecal_D\neq\emptyset$.

\begin{exa}\label{exa:Clifford}
If the mapping $f$ in Theorem~\ref{thm:konstr} is constant, then the image of
$f$ contains a single plane, say $\kappa_1$. Consequently, $\Hcal$ is the plane
of lines in $\kappa_1$ and $D$ is just one of the lines in $\kappa_1$.
Therefore the set $\Hcal$ contains also pencils other than those appearing in
\eqref{eq:konstr}. Indeed, any point of $\kappa_1$ is the vertex of a unique
pencil in $\Hcal$. The dimension of $\Hcal$ is two.
\end{exa}

\begin{exa}\label{exa:2Ebenen}
Let the image of the mapping $f$ in Theorem~\ref{thm:konstr} consist of two
distinct planes $\kappa_1,\kappa_2$ only. In a certain way this is the simplest
case apart from Example~\ref{exa:Clifford}. The mapping $f$ decomposes the line
$D$ into two non-empty subsets $D_1$ and $D_2$, namely the pre-images of
$\kappa_1$ and $\kappa_2$, respectively. By \eqref{eq:konstr}, the
corresponding hfd line set can be written in the form
\begin{equation}\label{eq:H_12}
        \Big(\bigcup_{v\in D_1}\Lcal[v,\kappa_1]\Big)
    \cup\Big(\bigcup_{v\in D_2}\Lcal[v,\kappa_2]\Big)=:\Hcal_{12}.
\end{equation}
The dimension of $\Hcal_{12}$ is three.
\begin{figure}[!ht]\unitlength1cm
  \centering
  \begin{picture}(7.5,4)%% 7.5 x 4 %% Maple Bild formal immer 8cm hoch
  \small
    %%\put(0,0){\includegraphics[height=5\unitlength]{gitter}}
    \put(0,0.0){\includegraphics[height=8\unitlength]{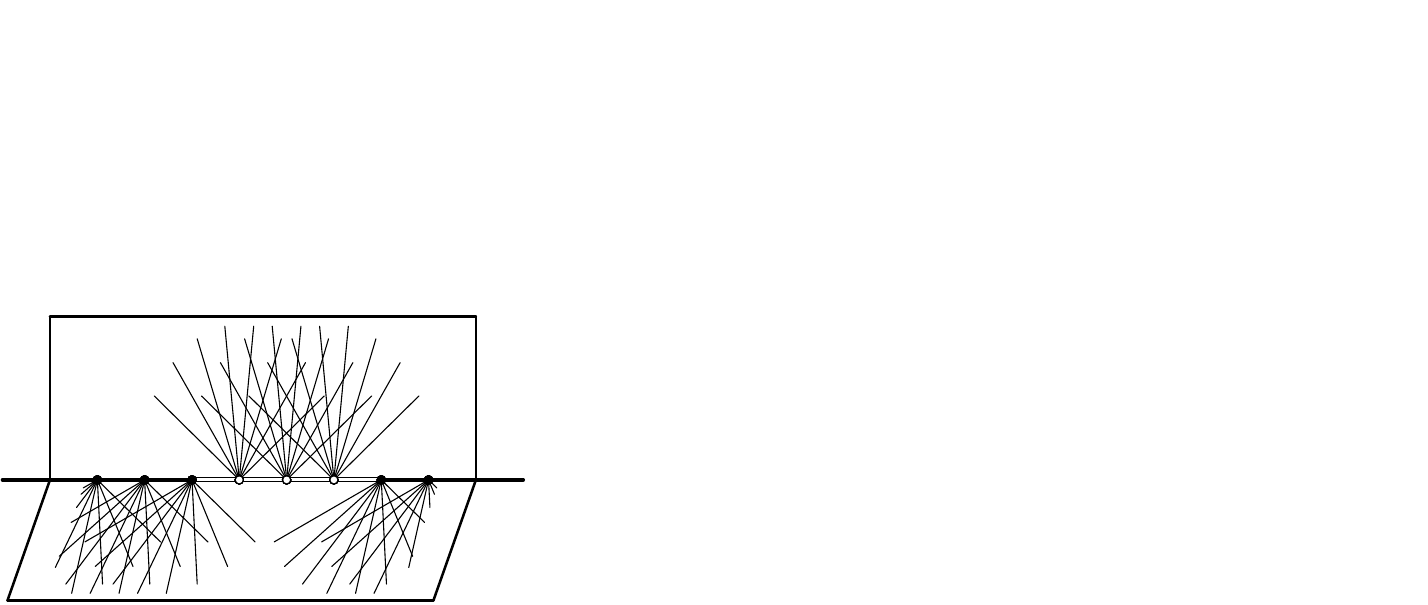}}
    \put(0.3,0.25){$\kappa_1$}
    \put(0.75,3.5){$\kappa_2$}
    \put(0.1,1.8){$D_1$}
    \put(3.5,1.25){$D_2$}
  \end{picture}
    \caption{An hfd line set $\Hcal_{12}$}\label{fig:1}
\end{figure}
The set $D_1$ may comprise a single point, or any finite number of distinct
points etc. Over the real numbers, $f$ can be chosen in such a way that $D_1$
is a connected component of $D$ with respect to the standard topology in
$\PG(5,\RR)$. Then $D_2$ is also connected; such a set is illustrated in
Figure~\ref{fig:1}.
\end{exa}
Further extensions and generalisations of the preceding examples are obvious.
The main result is a geometric description of \emph{all} pencilled hfd line
sets.

\begin{thm}[{\bf Main theorem on pencilled hfd line sets}]\label{thm:main}
In $\PR5$, let $\Hcal$ be a pencilled hfd line set. Denote by $\Vcal$ the set
of all vertices and by $\Kcal$ the set of all planes of the pencils in $\Hcal$.
Then the following hold.
\begin{enumerate}
\item\label{thm:main.d} All planes of $\Kcal$ are external to the Klein
    quadric $H_5$.
\item\label{thm:main.a} There exists a surjective mapping $h\colon
    \Vcal\to\Kcal$ that assigns to each $v\in\Vcal$ a plane $h(v)\in\Kcal$
    that is incident with $v$ and such that
    \begin{equation}\label{eq:main.a}
        \Lcal[v,h(v)] = \{X\in\Hcal\mid v\in X\} .
    \end{equation}
\item\label{thm:main.b} If $\Vcal$ is a set of non-collinear points, then
    $\Vcal$ is a plane, $\Kcal=\{\Vcal\}$, and $\Hcal$ is the set of lines
    in the plane $\Vcal$.
\item\label{thm:main.c} If $\Vcal$ is a set of collinear points, then
    $\Vcal$ is a line, $\Vcal\in\Hcal$, and\/ $|\Kcal|\geq 2$.
\item\label{thm:main.e} $ \Vcal = \bigcap_{\kappa\in \Kcal}\kappa$.
\end{enumerate}
\end{thm}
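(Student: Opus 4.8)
The plan is to extract structural information from the defining property of an hfd line set --- that each tangent hyperplane of $H_5$ contains exactly one line of $\Hcal$ --- together with the assumption that $\Hcal$ is pencilled. I would organise the argument roughly in the order \eqref{thm:main.d}, \eqref{thm:main.a}, then \eqref{thm:main.b}--\eqref{thm:main.c}, and finally \eqref{thm:main.e}, since the later parts lean on the earlier ones.

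For \eqref{thm:main.d}: let $\kappa\in\Kcal$ be the plane of a pencil $\Lcal[v,\kappa]\subset\Hcal$. Every line of $\Hcal$ is a $0$-secant of $H_5$, so $\kappa\cap H_5$ contains no line and meets $H_5$ in at most a conic (possibly degenerate). I would argue that if $p\in\kappa\cap H_5$ then the tangent hyperplane $\pi_5(p)$ would already contain the line of $\Lcal[v,\kappa]$ through $p$ --- and if two pencil lines pass through a point of $H_5$ one gets a contradiction with the ``exactly one'' clause; more carefully, since through every point of $\kappa$ pass lines of the pencil covering $\kappa$, a point of $\kappa\cap H_5$ lies on a line of $\Hcal$ that is tangent there, which I would push to a contradiction with the definition of a $0$-secant (an external line meets $H_5$ in nothing, hence cannot be tangent). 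The cleanest route is: a plane carrying a pencil of $0$-secants has all its lines external, hence the plane itself is external, because any point of $\kappa\cap H_5$ lies on a pencil line, contradicting that that line is a $0$-secant. This also disposes of the characteristic-two subtleties, since the argument is about incidence, not about the bilinear form.

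For \eqref{thm:main.a}: I want to show that through any vertex $v\in\Vcal$ \emph{all} lines of $\Hcal$ containing $v$ form a single pencil. Suppose $X_1,X_2\in\Hcal$ both pass through $v$. Each tangent hyperplane through $v$, i.e.\ each $\pi_5(x)$ with $x\in H_5$ and $v\in\pi_5(x)$ (equivalently $x\in\pi_5(v)\cap H_5$), contains exactly one line of $\Hcal$; I would use a counting/covering argument over the points $x$ of the subquadric $H_5\cap\pi_5(v)$ to force $X_1$ and $X_2$ to be coplanar, namely to span a plane that is necessarily external by \eqref{thm:main.d} applied to the pencil they generate, and hence to lie in a common pencil of $\Hcal$. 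Defining $h(v)$ as the plane of that pencil gives a well-defined map $\Vcal\to\Kcal$; surjectivity is immediate since every $\kappa\in\Kcal$ is $h(v)$ for the vertex $v$ of a pencil lying in $\kappa$, and \eqref{eq:main.a} is exactly the statement that this pencil exhausts the lines of $\Hcal$ through $v$. \textbf{This is the step I expect to be the main obstacle}, because it requires turning ``exactly one line per tangent hyperplane'' into a rigidity statement forcing coplanarity; I anticipate needing the auxiliary lemmas promised for Section~\ref{se:Proof}, in particular a lemma describing how the tangent hyperplanes through a fixed point $v$ sweep out $H_5$, and possibly a dimension count on $\spn\Hcal$.

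For \eqref{thm:main.b} and \eqref{thm:main.c}: these are a dichotomy on the configuration of $\Vcal$. If $\Vcal$ contains three non-collinear points $v_1,v_2,v_3$, then by \eqref{thm:main.a} the pencils $h(v_i)$ meet pairwise (two pencils of an hfd line set sharing no common line still have intersecting planes, since e.g.\ the line $v_iv_j$ carrying structure forces a common line or a common point), and I would show the three planes $h(v_i)$ coincide, call it $\alpha$; then $\alpha$ is external by \eqref{thm:main.d}, every line of $\alpha$ is a $0$-secant, and one checks via the hfd property (each tangent hyperplane of $H_5$ meets the external plane $\alpha$ in exactly one line --- a standard fact, cf.\ the external-plane literature cited) that $\Hcal$ is precisely the set of all lines of $\alpha$; hence $\Vcal=\alpha$ as a point set and $\Kcal=\{\alpha\}$. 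In the remaining case $\Vcal$ is contained in a line; I would show it equals that line $\ell$, that $\ell\in\Hcal$ (so $\ell$ is a $0$-secant and is itself one of the pencil lines, being the common line of the pencils at its points), and that $|\Kcal|\ge 2$ because a single plane would put us back in case \eqref{thm:main.b}, i.e.\ $\Hcal$ would be a full plane of lines and $\Vcal$ that whole plane, contradicting collinearity. Finally, \eqref{thm:main.e}: in the full-plane case $\Vcal=\alpha=\bigcap_{\kappa\in\Kcal}\kappa$ trivially since $\Kcal=\{\alpha\}$; in the collinear case, every pencil plane $\kappa\in\Kcal$ contains its own vertex and also every other vertex lying on $\kappa$, and conversely using \eqref{eq:main.a} one shows each $\kappa$ contains the whole line $\Vcal$ (the line $\Vcal\in\Hcal$ lies in $\kappa$ because it passes through the vertex of that pencil), so $\Vcal\subseteq\bigcap\kappa$; the reverse inclusion follows because a point common to all pencil planes is forced to be a vertex by \eqref{thm:main.a} together with \eqref{eq:main.a}. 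I would close by noting that the arguments are characteristic-free, with the genuinely characteristic-two phenomena deferred to Section~\ref{se:char=2}.
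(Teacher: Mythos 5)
Your skeleton matches the paper's (part \ref{thm:main.d} first, then a rigidity statement at each vertex, then the collinear/non-collinear dichotomy), and your argument for \ref{thm:main.d} is the paper's. But the load-bearing steps are only announced, not proved, and the one you sketch in detail is misdirected. For \ref{thm:main.a} you propose to ``force $X_1$ and $X_2$ to be coplanar''; that is vacuous, since any two lines through the common point $v$ span a plane. The actual content is to exclude a line $G\in\Hcal$ through $v$ with $G\not\subset\kappa$, where $\Lcal[v,\kappa]\subset\Hcal$, and your ``counting/covering argument over $H_5\cap\pi_5(v)$'' is never carried out. Moreover, your reason that the plane $X_1\vee X_2$ is external --- ``by \ref{thm:main.d} applied to the pencil they generate'' --- does not work: \ref{thm:main.d} applies only to planes of pencils \emph{contained in} $\Hcal$, and the pencil spanned by $X_1,X_2$ need not lie in $\Hcal$. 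The paper gets this from a separate lemma (Lemma~\ref{lem:C}): two distinct coplanar lines of $\Hcal$ span an external plane, because the hfd property forbids their span from lying in a tangent hyperplane, combined with the criterion of Lemma~\ref{lem:A} that a subspace $S$ lies in a tangent hyperplane iff $S\cap H_5$ contains a subspace of dimension at least $\dim S-2$. Then, if $G\not\subset\kappa$, the planes $X\vee G$ with $X\in\Lcal[v,\kappa]$ are all external and cover the solid $G\vee\kappa$, so $(G\vee\kappa)\cap H_5=\emptyset$; but every solid meets $H_5$, since its polar line lies in some tangent hyperplane (Corollary~\ref{cor:Gerade}). Nothing in your proposal substitutes for this chain, and you yourself flag it as the unresolved main obstacle.

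The same pattern recurs in \ref{thm:main.b} and \ref{thm:main.c}. The coincidence $h(v_1)=h(v_2)=h(v_3)$ needs the fact that the line joining two distinct vertices belongs to $\Hcal$ and lies in both pencil planes (the paper's Lemma~\ref{lem:E}, proved by taking a tangent hyperplane through $v_1\vee v_2$ and using that it carries exactly one line of $\Hcal$); you assert this (``forces a common line or a common point'') without an argument. In the collinear case, the statement that \emph{every} point of the line $D=v_1\vee v_2$ is a vertex is genuinely nontrivial: the paper proves it with Lemma~\ref{lem:flag}, choosing for each $p\in D$ a tangent hyperplane meeting $D$ exactly in $p$, whose unique line of $\Hcal$ must carry a vertex, which is then forced to be $p$; your ``I would show it equals that line $\ell$'' supplies no such mechanism. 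Your treatment of \ref{thm:main.d}, of $|\Kcal|\geq 2$, and of \ref{thm:main.e} is fine, but as it stands the proposal is a plan whose critical steps --- precisely what the paper packages as Lemmas~\ref{lem:A}--\ref{lem:E} and \ref{lem:flag} --- are missing.
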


The mapping $h$ allows us to write
\begin{equation}\label{eq:Hdarst}
    \Hcal=\bigcup_{{v\in\Vcal}}\Lcal[v,h(v)].
\end{equation}

\begin{rem}\label{rem:alle}
From Theorem~\ref{thm:main}~\ref{thm:main.a}, the construction in
Theorem~\ref{thm:konstr} produces \emph{all} pencilled hfd line sets. Indeed,
in order to get an appropriate mapping $f$ as in Theorem~\ref{thm:konstr} for a
given pencilled hfd line set $\Hcal$, it suffices to select some line $D\subset
\Vcal$ and to define $f\colon D\to\Ecal_D\colon v\mapsto h(v)$. Clearly,
Example~\ref{exa:Clifford} corresponds to the situation in
Theorem~\ref{thm:main}~\ref{thm:main.b} and vice versa. On the other hand,
Example~\ref{exa:2Ebenen}, where $|\Kcal|=2$, is a very particular case of the
more general setting in Theorem~\ref{thm:main}~\ref{thm:main.c}.
\end{rem}

So far we have focussed on pencilled hfd line sets in $\PR5$. We now use the
inverse of the bijection $\gamma$ from \eqref{eq:Passantenabb} in order to
obtain results about the corresponding pencilled regular parallelisms in
$\PR3$. (See Section~\ref{se:descript} for additional details.) Also, to
develop further our theory, we shall make use of results about Clifford
parallelism. The following characterisation generalises
\cite[Lemma~2.7]{bett+r-10a}, which is limited to the case $\KK=\RR$, to an
arbitrary ground field.

\begin{prop}\label{prop:const}
A parallelism $\Pbf$ of\/ $\PR3$ is Clifford if, and only if, $\Pbf$ is a
pencilled regular parallelism and its corresponding hfd line set $\gamma(\Pbf)$
is a plane of lines in\/ $\PR5$.
\end{prop}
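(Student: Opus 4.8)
The plan is to prove the two implications separately, drawing on the machinery already set up.

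For the ``if'' direction, assume $\Pbf$ is a pencilled regular parallelism whose hfd line set $\Hcal:=\gamma(\Pbf)$ is a plane of lines in some plane $\kappa_1\subset\PR5$. By Theorem~\ref{thm:main}~\ref{thm:main.d}, $\kappa_1$ is external to $H_5$, and this is precisely the situation of Example~\ref{exa:Clifford} (equivalently Theorem~\ref{thm:main}~\ref{thm:main.b}): $\Vcal=\kappa_1$, $\Kcal=\{\kappa_1\}$, and every point of $\kappa_1$ is the vertex of a unique pencil in $\Hcal$. I would then identify the corresponding regular parallelism $\gamma^{-1}(\Hcal)$ explicitly. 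The dimension of $\Hcal$ is two, so the parallel classes of $\Pbf$ are the regular spreads $\Ccal$ of $\PR3$ with $\gamma(\Ccal)\in\kappa_1$, i.e.\ whose Klein image spans a solid whose polar line lies in $\kappa_1$. The key observation is that the dual of an external plane to $H_5$ is a \emph{generalised line pencil} of solids meeting $H_5$ in elliptic quadrics, and under $\lambda^{-1}$ these correspond exactly to the parallel classes of a Clifford parallelism; here I would invoke (a coordinate-free version of) the standard fact that a regular parallelism of dimension two is Clifford, which is available over $\RR$ from \cite[Lemma~2.7]{bett+r-10a} and which I would re-derive over arbitrary $\KK$ using the algebra $\HH$ in \eqref{eq:A+B}. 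Concretely, the two solids through a general solid of the pencil are governed by a pencil of quadratic forms on the underlying $4$-dimensional $\KK$-space, and one shows this pencil has no real intersection with $H_5$ precisely when the associated quadratic form field extension (or quaternion algebra) satisfies (A) or (B); this produces the requisite $\KK$-algebra structure on the vector space of $\PR3$ whose left (or right) translations give $\Pbf$.

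For the ``only if'' direction, assume $\Pbf$ is Clifford, arising from a $\KK$-algebra $\HH$ subject to (A) or (B). I would first recall from the cited literature (\cite{havl-16a}, \cite{karz+k-88a}) that a Clifford parallelism is regular --- each parallel class is the regular spread of left (resp.\ right) parallel lines --- and then compute the hfd line set. The left translations $\lambda_c$ form a group of projectivities of $\PR3$ which, pushed through Klein's correspondence $\lambda$, act on $H_5\subset\PR5$; the orbit structure identifies the $\gamma$-images of the parallel classes. The essential point is that the pencil of left-parallel-class spreads is ``linear'' in the sense that their $\gamma$-images --- the $0$-secants $\gamma(\Ccal)$ --- all lie in a single plane, and that plane is external to $H_5$ exactly because no left parallel class degenerates (which is where (A)/(B) enters). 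I would exhibit this plane directly: it is the polar image under $\pi_5$ of the line (or plane) fixed by the relevant subgroup, or more concretely the image of $\{1\}^{\perp}$-type data from the algebra norm form. Since every point of that plane turns out to be a vertex of a pencil in the line set (as in Example~\ref{exa:Clifford}), the hfd line set is the full plane of lines, completing the implication.

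The main obstacle will be the ``only if'' direction's explicit computation of $\gamma(\Pbf)$ for a Clifford parallelism over an arbitrary field, particularly handling case (B) in characteristic two where $\HH$ is a purely inseparable extension and the quadric geometry of $H_5$ degenerates in unpleasant ways (the polarity $\pi_5$ and tangent-hyperplane language need care). I expect the cleanest route is to choose a basis of $\HH$ adapted to the norm form, write down the Plücker coordinates of the left-parallel class through a generic line, and read off that they satisfy three independent linear equations cutting out an external plane; the verification that this plane is external reduces to the anisotropy of the norm form of $\HH$, which is exactly the content of (A) or (B). Once the plane is in hand, Theorem~\ref{thm:main}~\ref{thm:main.b} (or Example~\ref{exa:Clifford}) immediately gives that the hfd line set is a plane of lines and is pencilled, and Theorem~\ref{thm:konstr} confirms it is genuinely an hfd line set, closing the argument.
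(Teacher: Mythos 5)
The crux of this proposition is the ``if'' direction, and that is exactly where your argument has a genuine gap. After reducing to the situation of Theorem~\ref{thm:main}~\ref{thm:main.b} (an external plane $\kappa_1$ with $\Hcal=\gamma(\Pbf)$ the plane of lines in $\kappa_1$), you must show that the parallelism whose parallel classes have Klein images spanning precisely the solids through $\pi_5(\kappa_1)$ is Clifford; over an arbitrary field this requires constructing, from the geometric data alone, a $\KK$-algebra $\HH$ satisfying (A) or (B) in \eqref{eq:A+B} whose left (or right) translations reproduce $\Pbf$. You assert this as ``the standard fact that a regular parallelism of dimension two is Clifford'', citing \cite[Lemma~2.7]{bett+r-10a} (which is only for $\KK=\RR$) and promising to ``re-derive'' it via ``a pencil of quadratic forms'' having ``no real intersection with $H_5$'' --- but no construction of $\HH$ is actually indicated, the family of solids through a fixed plane is a two-parameter star rather than a pencil, and ``no real intersection'' has no meaning over a general $\KK$. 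This missing step is precisely the hard core of the statement: the paper handles it by first using Lemma~\ref{lem:extern} and the polarity $\pi_5$ to identify the spans of the Klein images of the parallel classes with the solids through the external plane $\pi_5(\kappa_1)$, and then quoting \cite[Thm.~4.8]{havl-16a}; the authors even remark explicitly that no short direct construction of $\HH$ from $\Pbf$ is known. As it stands, your ``if'' direction assumes what is to be proved.

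Your ``only if'' direction is viable but amounts to re-proving \cite[Thm.~5.1]{havl-16a}: computing Pl\"ucker coordinates of the left parallel classes of $\HH$ and checking that their $\gamma$-images fill a plane that is external because the norm form is anisotropic, with the characteristic-two case (B) treated via the null polarity. The paper instead cites that theorem to get an external plane $\varepsilon_1$ such that the parallel classes span exactly the solids through $\varepsilon_1$; regularity of $\Pbf$ then falls out (each class is the $\lambda$-preimage of an elliptic subquadric), and applying $\pi_5$ shows $\gamma(\Pbf)$ is the plane of lines in $\pi_5(\varepsilon_1)$ --- note that Theorem~\ref{thm:konstr} and Theorem~\ref{thm:main}~\ref{thm:main.b} are not needed here at all. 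If you carried out the coordinate computation in full, this half would be a self-contained alternative to the citation; but the ``if'' half needs either \cite[Thm.~4.8]{havl-16a} or a genuinely new argument, neither of which your sketch supplies.
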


We add in passing that our proof of the proposition above uses
\cite[Thm.~4.8]{havl-16a}, which in turn is based upon a series of other
results about Clifford parallelism. It would be favourable to have a shorter,
more direct proof for the fact that $\gamma(\Pbf)$ being a plane of lines
forces $\Pbf$ to be Clifford. The point is, of course, to construct from $\Pbf$
a $\KK$-algebra $\HH$ that makes it possible to verify that $\Pbf$ is Clifford.

\begin{rem}\label{rem:Clifford}
The pencilled hfd line sets from Example~\ref{exa:Clifford} (based on constant
mappings $f$) are precisely the ones that correspond under $\gamma^{-1}$ to the
Clifford parallelisms of $\PR3$. This is immediate from Remark~\ref{rem:alle}
and Proposition~\ref{prop:const}.
\par
On the other hand, the pencilled regular parallelism $\gamma^{-1}(\Hcal_{12})$
arising from \eqref{eq:H_12} is not Clifford by Proposition~\ref{prop:const};
one might call $\gamma^{-1}(\Hcal_{12})$ a \emph{piecewise Clifford
parallelism} (with two pieces).
\end{rem}

By the above considerations and in view of the results from \cite{bett+r-10a},
Clifford parallelism is just a very particular case within our general theory.
Nevertheless, Clifford parallelism is a relevant part of our investigation,
because it is used below to establish an algebraic criterion for the existence
of arbitrary pencilled regular parallelisms.

\begin{thm}\label{thm:Existenz}
Given any field\/ $\KK$ the following assertions are equivalent.
\begin{enumerate}
\item\label{thm:Existenz.a} In\/ $\PR3$ there exists a pencilled regular
    parallelism that is not Clifford.
\item\label{thm:Existenz.b} In\/ $\PR3$ there exists a Clifford
    parallelism.
\item\label{thm:Existenz.c} There exists an algebra\/ $\HH$ over the
    field\/ $\KK$ such that one of the conditions, \emph{(A)} or\/
    \emph{(B)}, in equation\/ \eqref{eq:A+B} is satisfied.
\end{enumerate}
\end{thm}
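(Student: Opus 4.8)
The plan is to prove the three-way equivalence by a cycle of implications $\ref{thm:Existenz.c}\Rightarrow\ref{thm:Existenz.b}\Rightarrow\ref{thm:Existenz.a}\Rightarrow\ref{thm:Existenz.c}$. The implication $\ref{thm:Existenz.c}\Rightarrow\ref{thm:Existenz.b}$ is immediate from the very definition of Clifford parallelism recalled in Section~\ref{se:prelim}: given an algebra $\HH$ subject to (A) or (B), one takes $\HH$ as the underlying vector space of $\PR3$ and forms the left (or right) parallelism, which by definition is Clifford. The implication $\ref{thm:Existenz.b}\Rightarrow\ref{thm:Existenz.a}$ is the heart of the matter. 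First I would invoke Proposition~\ref{prop:const}: a Clifford parallelism is pencilled and regular, and its hfd line set $\gamma(\Pbf)$ is a \emph{plane} of lines in $\PR5$, say the plane of all lines in some external plane $\kappa_1$. This is precisely the situation of Example~\ref{exa:Clifford}, arising from a constant mapping $f$. Now, since $\Ecal_{D}\neq\emptyset$ for any line $D\subset\kappa_1$, I can instead choose a \emph{non-constant} mapping $f\colon D\to\Ecal_D$ whose image consists of at least two distinct external planes through $D$ (for instance the construction of Example~\ref{exa:2Ebenen}, provided a second external plane through $D$ exists; if only $\kappa_1$ itself lies in $\Ecal_D$ one must first argue that some line $D$ admits at least two external planes, which follows from counting the external planes through a generic line of $\kappa_1$). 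By Theorem~\ref{thm:konstr} the resulting union $\Hcal_{12}$ is again a pencilled hfd line set, but now $|\Kcal|\geq 2$, so by Proposition~\ref{prop:const} the parallelism $\gamma^{-1}(\Hcal_{12})$ is \emph{not} Clifford (its hfd line set is not a plane of lines). This produces the desired pencilled regular, non-Clifford parallelism.

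For the implication $\ref{thm:Existenz.a}\Rightarrow\ref{thm:Existenz.c}$ I would argue as follows. Suppose $\Pbf$ is a pencilled regular parallelism in $\PR3$ (Clifford or not). Then $\gamma(\Pbf)=\Hcal$ is a pencilled hfd line set, and Theorem~\ref{thm:main} applies. Let $\Vcal$ and $\Kcal$ be the vertex set and the plane set of its pencils. If $\Vcal$ is a set of non-collinear points, then by Theorem~\ref{thm:main}~\ref{thm:main.b} the set $\Vcal$ is a plane, external to $H_5$ by part~\ref{thm:main.d}, and $\Hcal$ is the plane of all lines in it — so $\gamma^{-1}(\Hcal)$ is Clifford by Proposition~\ref{prop:const}, and thus \ref{thm:Existenz.c} holds via Section~\ref{se:prelim}. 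If $\Vcal$ is a set of collinear points, then by Theorem~\ref{thm:main}~\ref{thm:main.c} we have $|\Kcal|\geq 2$, and by parts~\ref{thm:main.d} and~\ref{thm:main.e} the planes in $\Kcal$ are external and meet in the line $\Vcal$; picking any one $\kappa\in\Kcal$ gives an external plane of $H_5$, hence (again by Proposition~\ref{prop:const} applied to the \emph{constant} mapping onto $\kappa$, equivalently to Example~\ref{exa:Clifford}) there \emph{does} exist a Clifford parallelism in $\PR3$, and therefore an algebra $\HH$ as in \ref{thm:Existenz.c}. In either case the existence of a single external plane to the Klein quadric — guaranteed by Theorem~\ref{thm:main}~\ref{thm:main.d} once \emph{any} pencilled hfd line set exists — is what feeds back, through Example~\ref{exa:Clifford} and Proposition~\ref{prop:const}, into the existence of the algebra $\HH$.

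The step I expect to be the main obstacle is the passage from ``an external plane to $H_5$ exists'' to ``an algebra $\HH$ satisfying (A) or (B) exists.'' This is not a formal triviality: one must actually exhibit, from an external plane $\kappa$, a $\KK$-algebra structure on the underlying $4$-dimensional vector space $\HH$ of $\PR3$ for which the left translations produce a parallelism with $\gamma$-image the plane of lines in $\kappa$. The cleanest route is to observe that this is exactly the content already packaged in Proposition~\ref{prop:const} together with Example~\ref{exa:Clifford}: the hfd line set ``plane of lines in $\kappa$'' corresponds under $\gamma^{-1}$ to a Clifford parallelism, and a Clifford parallelism by definition comes with such an algebra $\HH$. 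So the real work has been pushed into Proposition~\ref{prop:const} (whose proof, as the authors remark, relies on \cite[Thm.~4.8]{havl-16a}), and the proof of Theorem~\ref{thm:Existenz} itself then reduces to the bookkeeping of which of the two alternatives in Theorem~\ref{thm:main} holds, plus the elementary construction of a non-constant $f$ in the $\ref{thm:Existenz.b}\Rightarrow\ref{thm:Existenz.a}$ step. A minor care point worth flagging: in the non-Clifford construction one needs a line $D$ admitting \emph{two} external planes; this should be dispatched by a short dimension count on the pencil of planes through a generic line of $\kappa_1$, or by directly citing the explicit constructions underlying Examples~\ref{exa:Clifford} and~\ref{exa:2Ebenen}.
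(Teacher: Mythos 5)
Your overall architecture (the cycle \ref{thm:Existenz.c}$\Rightarrow$\ref{thm:Existenz.b}$\Rightarrow$\ref{thm:Existenz.a}$\Rightarrow$\ref{thm:Existenz.c}) matches the paper's proof in substance: the paper proves \ref{thm:Existenz.a}$\Leftrightarrow$\ref{thm:Existenz.b} via Proposition~\ref{prop:const}, Theorem~\ref{thm:konstr} with a constant mapping, and the two-plane construction of Example~\ref{exa:2Ebenen}, and settles \ref{thm:Existenz.b}$\Leftrightarrow$\ref{thm:Existenz.c} by citing \cite[Thm.~4.8, Thm.~5.1]{havl-16a}; your \ref{thm:Existenz.a}$\Rightarrow$\ref{thm:Existenz.c} step is just the paper's \ref{thm:Existenz.a}$\Rightarrow$\ref{thm:Existenz.b} followed by the definitional passage from a Clifford parallelism to the algebra $\HH$, which is fine.

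However, there is a genuine gap exactly at the point you flag as a ``minor care point'': in \ref{thm:Existenz.b}$\Rightarrow$\ref{thm:Existenz.a} you need a second external plane $\kappa_2$ through a line $D\subset\kappa_1$ (i.e.\ $|\Ecal_D|\geq 2$), and neither of your suggested fixes works. A ``dimension count on the pencil of planes through a generic line of $\kappa_1$'' cannot succeed over an arbitrary field, because being external to $H_5$ is an arithmetic condition, not an open/generic one: over a finite field $0$-secant lines of $H_5$ exist while external planes do not at all, so no field-independent counting of planes through a line can produce a second external plane; and Examples~\ref{exa:Clifford} and~\ref{exa:2Ebenen} do not help, since Example~\ref{exa:2Ebenen} \emph{presupposes} $|\Ecal_D|\geq2$. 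The paper closes precisely this gap with Lemma~\ref{lem:1mach2}: starting from the external plane $\varepsilon_1=\kappa_1$, one takes a tangent line $T$ of $H_5$, uses Lemma~\ref{lem:extern} and the infinitude of $\KK$ (forced by the existence of an external plane) to find a point $q\in T$ off $H_5\cup\varepsilon_1\cup\pi_5(\varepsilon_1)$, and then the perspectivity $\sigma$ of order two with centre $q$ and axis $\pi_5(q)$, which stabilises $H_5$, maps $\varepsilon_1$ to a second external plane $\varepsilon_2=\sigma(\varepsilon_1)$ with $\varepsilon_1\cap\varepsilon_2$ a line. Some argument of this kind (or an algebraic substitute producing two intermediate fields of a quaternion algebra, say) is indispensable; without it your proof of \ref{thm:Existenz.b}$\Rightarrow$\ref{thm:Existenz.a} is incomplete, while the remaining implications in your proposal are sound.
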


\begin{rem}\label{rem:nonexist}
Theorem~\ref{thm:Existenz} shows, as a by-product, that pencilled regular
parallelisms (pencilled hfd line sets) do not exist when $\KK$ is quadratically
closed or finite, since such a $\KK$ does not satisfy
Theorem~\ref{thm:Existenz}~\ref{thm:Existenz.c}. However, this can be seen
directly: If $\KK$ is quadratically closed, then there are no $0$-secants of
$H_5$. If $\KK$ is finite, then $0$-secants of $H_5$ do exist, but external
planes to the Klein quadric do not; see the proof of Lemma~\ref{lem:1mach2}.
Thus in both cases there cannot be pencilled hfd line sets.
\end{rem}

We read off from Proposition~\ref{prop:const} that
Theorem~\ref{thm:Existenz}~\ref{thm:Existenz.a} holds if, and only if, there is
a line $D$ in $\PR5$ such that $|\Ecal_D|\geq 2$. So, again using
Theorem~\ref{thm:Existenz}, the construction of a pencilled hfd line set
$\Hcal_{12}$ in Example~\ref{exa:2Ebenen} can be carried out, precisely when
the algebraic condition in Theorem~\ref{thm:Existenz}~\ref{thm:Existenz.c} is
satisfied by $\KK$. We therefore have shown that under this condition there
exist, in $\PR3$, pencilled regular parallelisms with dimension $d=2$ and with
dimension $d=3$. However, we did not undertake a study of the cases with
$d\in\{4,5\}$. According to \cite{bett+r-10a}, pencilled regular parallelisms
of the latter dimensions exist over the real numbers; future work should
address these cases in the setting of
Theorem~\ref{thm:Existenz}~\ref{thm:Existenz.c}.

\section{Proofs}\label{se:Proof}

We start with three auxiliary lemmas.

\begin{lem}\label{lem:A}
Let $S$ be a subspace of\/ $\PR5$. There exists a tangent hyperplane $\tau$ of
the Klein quadric $H_5$ with $S\subset\tau $ if, and only if, there exists a
subspace $M$ of\/ $\PR5$ satisfying
\begin{equation}\label{eq:A_Bedingung}
    M\subset S\cap H_5 \mbox{~~~and~~~} \dim M\geq \dim S -2.
\end{equation}
\end{lem}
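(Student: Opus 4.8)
\emph{The plan} is to recast both conditions in terms of the polarity $\pi_5$ and then to reduce the dimension by passing to the residue of a suitable subspace contained in $H_5$. Since the tangent hyperplanes of $H_5$ are precisely the hyperplanes $\pi_5(x)$ with $x\in H_5$, and since $S\subset\pi_5(x)$ holds if and only if $x\in\pi_5(S)$ (because $\pi_5$ is a polarity), the existence of a tangent hyperplane $\tau$ with $S\subset\tau$ is equivalent to $\pi_5(S)\cap H_5\neq\emptyset$; recall also that $\dim\pi_5(S)=4-\dim S$. So the claim to be proved reads: $\pi_5(S)$ meets $H_5$ if, and only if, $S$ contains a subspace $M$ of $H_5$ with $\dim M\ge\dim S-2$. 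I would use only two facts, both characteristic-free: the \emph{index bound} --- if $\PR{m}$ carries a quadric of projective index $g$, then every subspace $U$ of $\PR{m}$ contains a subspace of that quadric of dimension at least $\dim U-(m-g)$ (intersect $U$ with a maximal singular subspace) --- and the description of the \emph{residue} of a subspace $N\subset H_5$ with $\dim N=t$: the quotient space $\pi_5(N)/N$ is a $\PR{3-2t}$ on which $H_5$ induces a quadric $\widehat H$ of projective index $1-t$, whose points are exactly the subspaces of $H_5$ of dimension $t+1$ through $N$.

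For the implication ``$\Leftarrow$'', let $M\subset S\cap H_5$ with $\dim M\ge\dim S-2$. If $\pi_5(S)\cap M\neq\emptyset$, then any point of this intersection lies in $\pi_5(S)$ and, as $M\subset H_5$, also on $H_5$; done. Otherwise $\pi_5(S)$, which is contained in $\pi_5(M)$ because $M\subset S$, maps injectively into the quotient space $\pi_5(M)/M$, with image of dimension $4-\dim S$ (a dimension count then forces $\dim M\le 1$, so this residue is a genuine projective space carrying a quadric of index $\ge 0$). Applying the index bound inside the residue, where $\widehat H$ has index $1-\dim M$, gives a point of $\widehat H$ lying in the image of $\pi_5(S)$; any preimage of it in $\pi_5(S)$ lies on $H_5$. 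Either way $\pi_5(S)\cap H_5\neq\emptyset$.

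For the implication ``$\Rightarrow$'', choose $x\in\pi_5(S)\cap H_5$. Then $S\subset\pi_5(x)$ and $\langle x\rangle$ is a subspace of $H_5$, so I pass to the residue $\pi_5(x)/\langle x\rangle$, a $\PR3$ carrying a quadric $\widehat H$ of projective index $1$. The image $\overline S$ of $S$ in this residue has dimension $\dim S$ if $x\notin S$ and $\dim S-1$ if $x\in S$. By the index bound, $\overline S$ contains a subspace of $\widehat H$ of dimension at least $\dim S-2$ in the first case and at least $\dim S-3$ in the second. By the residue description this subspace of $\widehat H$ corresponds to a subspace $N_1$ of $H_5$ that contains $x$, has one greater dimension, and lies in $\langle S,x\rangle$. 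If $x\in S$, then $N_1\subset S$ and $\dim N_1\ge\dim S-2$; if $x\notin S$, then $N_1\cap S$ has dimension exactly one less than $N_1$, hence again at least $\dim S-2$. In both cases $M:=N_1$ (respectively $M:=N_1\cap S$) is a subspace of $S\cap H_5$ with $\dim M\ge\dim S-2$.

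I expect the one substantive point to be recognising that the right move is to pass to the residue of $M$, respectively of $\langle x\rangle$; after that the argument is pure bookkeeping. The thing to keep straight is exactly that bookkeeping: the two case distinctions ($\pi_5(S)\cap M$ empty or not, and $x\in S$ or not) together with the dimension shifts on entering and leaving the residue. The degenerate ranges $\dim S\le 1$ (where $\dim S-2<0$) are trivial, since one may take $M$ empty. Characteristic two requires no separate treatment: both the index bound and the residue construction hold verbatim there, as long as one is careful to work throughout with genuine subspaces of $H_5$ (on which the Klein quadratic form vanishes), so that the induced quadrics $\widehat H$ on the residues come out right.
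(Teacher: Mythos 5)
Your argument is correct, but it takes a genuinely different route from the paper. You dualise the statement via the polarity ($S$ lies in a tangent hyperplane if and only if $\pi_5(S)$ meets $H_5$) and then work with residues of singular subspaces of $H_5$ together with the index bound $\dim(U\cap W)\ge \dim U+\dim W-m$; the two case distinctions ($\pi_5(S)\cap M$ empty or not, and $x\in S$ or not) and the dimension shifts check out, including the degenerate case $M=\emptyset$, where your ``residue'' is all of $\PR5$ and the index bound directly yields a point of $H_5$ on $\pi_5(S)$, so even the range $\dim S\le 1$ is covered by the general argument rather than being merely ``trivial''. The paper instead avoids the polarity and the residue machinery altogether: it first observes that a hyperplane of $\PR5$ is tangent to $H_5$ exactly when it contains a plane lying on $H_5$ (tangent sections are cones of projective index $2$, all other sections are Lie subquadrics of index $1$), and then both implications are one-line applications of Grassmann's dimension formula --- forwards with $M:=S\cap\mu$ for a plane $\mu\subset\tau\cap H_5$, backwards by extending $M$ to a plane $\mu$ of $H_5$ and noting $\dim(S\vee\mu)\le 4$, so that $S\vee\mu$ lies in a hyperplane which is tangent because it contains $\mu$. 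The paper's route is shorter and more elementary, needing only the classification of hyperplane sections of the Klein quadric; yours is heavier on polar-space structure (singular residues and their induced quadrics, used also for higher-dimensional singular subspaces, a standard fact you invoke implicitly in the forward direction), but it is conceptually uniform, characteristic-free as you note, and would transfer with only bookkeeping changes to quadrics of other dimensions and Witt indices, whereas the paper's tangency criterion is specific to $H_5$.
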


\begin{proof}
As we noted in Section~\ref{se:prelim}, a tangent hyperplane of the Klein
quadric meets $H_5$ along a quadratic cone with projective index $2$. Any other
hyperplane of $\PR5$ intersects $H_5$ in a Lie subquadric, which has projective
index $1$. So, a hyperplane $\theta$ of $\PR5$ is tangent to the Klein quadric
$H_5$ precisely when $\theta$ contains a plane $\mu$ that lies on $H_5$.
\par
If $S$ is contained in a tangent hyperplane $\tau $, then there is a plane
$\mu\subset\tau \cap H_5$. The subspace $M:=S\cap\mu$ clearly satisfies the
first condition from \eqref{eq:A_Bedingung} and also the second one, since
$S\vee\mu\subset\tau $ gives
\begin{equation*}
    \dim M=\dim S+\dim \mu-\dim (S\vee\mu)\geq\dim S+2-4.
\end{equation*}
\par
Conversely, if there is a subspace $M$ subject to \eqref{eq:A_Bedingung}, then
there is a plane of $H_5$, say $\mu$, that contains $M$. So, since $M\subset
S\cap\mu$, we obtain
\begin{equation*}
    \dim (S\vee\mu)=\dim S+\dim \mu-\dim (S\cap\mu)\leq\dim S+2-(\dim S-2).
\end{equation*}
This implies that $S\vee\mu$ is contained in a hyperplane of $\PR5$, which is
tangent to $H_5$ by the above-noted characterisation.
\end{proof}

\begin{cor}\label{cor:Gerade}
In\/ $\PR5$, any subspace $S$ with $\dim S\leq 1$ is contained in at least one
tangent hyperplane of the Klein quadric $H_5$.
\end{cor}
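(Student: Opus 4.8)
The plan is to deduce Corollary~\ref{cor:Gerade} directly from Lemma~\ref{lem:A} by exhibiting, for each subspace $S$ of $\PR5$ with $\dim S\le 1$, a subspace $M$ satisfying the condition \eqref{eq:A_Bedingung}. Since $\dim S\le 1$ means $\dim S-2\le -1$, the inequality $\dim M\ge \dim S-2$ is automatically fulfilled by the empty subspace $M=\emptyset$, for which $\dim M=-1$. Moreover $\emptyset\subseteq S\cap H_5$ trivially, so the first condition in \eqref{eq:A_Bedingung} holds as well. Hence Lemma~\ref{lem:A} applies and yields a tangent hyperplane $\tau$ of $H_5$ containing $S$.

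First I would state this argument as the whole proof: take $M$ to be the empty subspace, check both parts of \eqref{eq:A_Bedingung}, and invoke Lemma~\ref{lem:A}. No genuine obstacle arises; the only subtlety is making sure the reader accepts $\dim\emptyset=-1$ and that the empty set is allowed as a ``subspace'' $M$ in the statement of Lemma~\ref{lem:A} — which it is, by the convention adopted in Section~\ref{se:prelim}, where subspaces of dimension $-1$ are explicitly admitted (the subquadrics are indexed by $r\in\{-1,0,\ldots,5\}$). One might alternatively phrase it without reference to the empty set by treating the two cases $\dim S=-1$, $\dim S=0$, and $\dim S=1$ separately and choosing $M$ to be a point of $S\cap H_5$ when one exists, but this is needlessly complicated and in fact not always possible (a line $S$ external to $H_5$ meets $H_5$ in the empty set), so the clean route really is the empty-subspace one.

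A concrete proof would read roughly as follows.

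\begin{proof}
Let $M$ be the empty subspace of\/ $\PR5$, so that $\dim M=-1$. Then $M\subset
S\cap H_5$, and $\dim M=-1\geq\dim S-2$ because $\dim S\leq 1$. Thus $M$
satisfies \eqref{eq:A_Bedingung}, and Lemma~\ref{lem:A} provides a tangent
hyperplane $\tau$ of the Klein quadric $H_5$ with $S\subset\tau$.
\end{proof}

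The hardest part, such as it is, is purely expository: flagging that the backward direction of Lemma~\ref{lem:A} does not require $M$ to be non-empty, so that the degenerate instance $M=\emptyset$ is legitimate. Everything else is immediate.
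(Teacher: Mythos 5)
Your proof is correct and is exactly the argument the paper intends: the corollary is stated without proof as an immediate consequence of Lemma~\ref{lem:A}, obtained by taking the degenerate subspace $M=\emptyset$ (so $\dim M=-1\geq\dim S-2$ when $\dim S\leq 1$). Your remark that the backward direction of Lemma~\ref{lem:A} works for $M=\emptyset$ (it only needs some plane on $H_5$ containing $M$, and the Klein quadric always carries planes) is the right point to flag, and nothing further is needed.
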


\begin{lem}\label{lem:extern}
In\/ $\PR5$, if a plane $\varepsilon$ is external to the Klein quadric $H_5$,
then so is the polar plane $\pi_5(\varepsilon)$.
\end{lem}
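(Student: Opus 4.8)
The plan is to argue contrapositively on the level of the polar plane. Suppose $\pi_5(\varepsilon)$ is not external to $H_5$; then it contains a point $y\in H_5$. The key observation is the general fact from projective polarity theory that for a point $y$ and a plane $\varepsilon$ in $\PR5$ one has $y\in\pi_5(\varepsilon)$ if, and only if, $\varepsilon\subset\pi_5(y)$. Thus $\varepsilon$ is contained in the tangent hyperplane $\pi_5(y)$ of $H_5$, and by the characterisation recalled in the proof of Lemma~\ref{lem:A} this hyperplane meets $H_5$ in a quadratic cone of projective index $2$, hence in particular contains planes lying on $H_5$. So it is not yet immediate that $\varepsilon\cap H_5\neq\emptyset$; I need a little more.

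The cleaner route is to invoke Lemma~\ref{lem:A} directly with $S=\varepsilon$. Since $\varepsilon\subset\pi_5(y)$ and $\pi_5(y)$ is a tangent hyperplane of $H_5$, Lemma~\ref{lem:A} (the ``only if'' direction) yields a subspace $M$ of $\PR5$ with $M\subset\varepsilon\cap H_5$ and $\dim M\geq\dim\varepsilon-2=2-2=0$. Hence $M$ is non-empty, so $\varepsilon\cap H_5\supset M\neq\emptyset$, i.e. $\varepsilon$ is not external to $H_5$. This establishes the contrapositive: if $\varepsilon$ is external, then $\pi_5(\varepsilon)$ is external as well.

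So the write-up is short: first state and justify (or cite as standard) the duality equivalence $y\in\pi_5(\varepsilon)\iff\varepsilon\subset\pi_5(y)$; then apply Lemma~\ref{lem:A} with $S=\varepsilon$ to convert ``$\varepsilon$ lies in a tangent hyperplane'' into ``$\varepsilon\cap H_5\neq\emptyset$'', and take the contrapositive. The only point requiring any care is the first step, but it is nothing more than the symmetry of the bilinear form underlying $\pi_5$ together with the fact that $\pi_5$ is an involutory polarity; this is entirely routine and can be stated in one line. I expect no real obstacle here — the lemma is essentially an immediate corollary of Lemma~\ref{lem:A} once the standard polarity identity is in place.
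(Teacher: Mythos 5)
Your argument is correct and is essentially the paper's own proof read contrapositively: the paper notes that $\varepsilon\cap H_5=\emptyset$ together with Lemma~\ref{lem:A} rules out any tangent hyperplane containing $\varepsilon$, and then applies $\pi_5$ (the same duality $y\in\pi_5(\varepsilon)\iff\varepsilon\subset\pi_5(y)$ you invoke) to conclude that $\pi_5(\varepsilon)$ misses $H_5$. The brief worry in your first paragraph is unnecessary, since Lemma~\ref{lem:A} with $S=\varepsilon$ already gives the exact equivalence you need.
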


\begin{proof}
The plane $\varepsilon$ contains no point of $H_5$. Hence, by Lemma
\ref{lem:A}, there is no tangent hyperplane of $H_5$ containing $\varepsilon$.
Application of $\pi_5$ gives that there is no point of $H_5$ incident with
$\pi_5(\varepsilon)$.
\end{proof}

\begin{lem}\label{lem:flag}
In\/ $\PR5$, let $p\notin H_5$ be a point incident with a line $G$. Then there
exists $x\in H_5$ with $p\in\pi_5(x)$ and $G\not\subset\pi_5(x)$.
\end{lem}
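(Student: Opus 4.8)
\textbf{Proof plan for Lemma~\ref{lem:flag}.}
The plan is to argue by contradiction: suppose that \emph{every} point $x\in H_5$ with $p\in\pi_5(x)$ satisfies $G\subset\pi_5(x)$. The condition $p\in\pi_5(x)$ is, by symmetry of the polarity $\pi_5$, equivalent to $x\in\pi_5(p)$; and since $p\notin H_5$, the polar image $\pi_5(p)$ is a genuine hyperplane meeting $H_5$ in a Lie subquadric $L_4=H_5\cap\pi_5(p)$, which is a regular quadric of projective index $1$. Likewise $G\subset\pi_5(x)$ is equivalent to $x\in\pi_5(G)$, where $\pi_5(G)$ is a solid (since $\dim G=1$). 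So the assumption becomes: $L_4\subset\pi_5(G)$, i.e.\ the whole Lie subquadric, hence its span, lies inside the solid $\pi_5(G)$.

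The next step is to see why this span is too big to fit in a solid. A Lie subquadric $L_4$ is a non-degenerate quadric of rank $5$ sitting in the hyperplane $\pi_5(p)$; in particular it spans that entire hyperplane $\pi_5(p)$, which has dimension $4$. (If $L_4$ spanned only a subspace $T\subsetneq\pi_5(p)$, then $L_4$ would be a cone with vertex $T^{\perp}\cap\pi_5(p)\neq\emptyset$ inside $\pi_5(p)$, contradicting that $L_4$ is a regular quadric.) Thus $\spn L_4=\pi_5(p)$ has dimension $4$, whereas $\pi_5(G)$ has dimension $3$, so $\spn L_4\subset\pi_5(G)$ is impossible. This contradiction shows that some $x\in H_5$ with $p\in\pi_5(x)$ must fail $G\subset\pi_5(x)$, which is exactly the claim. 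As a by-product one even sees that the ``bad'' points $x$ — those with $G\subset\pi_5(x)$ — lie on $L_4\cap\pi_5(G)$, a proper subquadric of $L_4$.

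For the characteristic-two case one has to be slightly careful, since there the polarity $\pi_5$ attached to $H_5$ is a null (pseudo-)polarity and ``$\pi_5$ of a point'' is its tangent hyperplane at that point when the point lies on $H_5$; but here $p\notin H_5$, so $\pi_5(p)$ is still a well-defined hyperplane with $p\notin\pi_5(p)$, and $H_5\cap\pi_5(p)$ is still a Lie subquadric $L_4$ as recalled in Section~\ref{se:prelim}. The only subtlety is that in characteristic two a quadric and the symmetric bilinear form of its polarity can disagree; what matters, though, is solely that $L_4$ is a \emph{regular} quadric of projective index $1$, hence spans its ambient hyperplane $\pi_5(p)$, and that fact holds over every field. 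So the dimension count above goes through verbatim.

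The main obstacle I expect is purely bookkeeping: pinning down cleanly that $\spn L_4=\pi_5(p)$, i.e.\ that a regular (index-$1$, non-cone) quadric is not contained in any proper subspace of the hyperplane it lives in. One neat way to avoid this is the direct route: pick two distinct points $x_1,x_2\in L_4$ on a line of $L_4$ (they exist since the projective index is $1$), choose $x_1,x_2$ together with three further points of $L_4$ in general position so they span $\pi_5(p)$; if all five lay in $\pi_5(G)$ we'd get $\pi_5(p)\subset\pi_5(G)$, i.e.\ $G\subset p$, absurd. Either way the argument is short once the standard structure of $L_4$ recalled in Section~\ref{se:prelim} is invoked.
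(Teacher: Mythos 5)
Your argument is correct and is essentially the paper's own proof: the Lie subquadric $L_4=H_5\cap\pi_5(p)$ spans the hyperplane $\pi_5(p)$ (dimension $4$), so it cannot be contained in the solid $\pi_5(G)$ (dimension $3$), and applying $\pi_5$ to a point of $L_4$ off $\pi_5(G)$ yields the required $x$. One inessential slip in your characteristic-two aside: there $\pi_5$ is a null polarity, so in fact $p\in\pi_5(p)$ for \emph{every} point $p$, but this is never used — all that matters is that $L_4$ is a regular quadric of projective index $1$ spanning $\pi_5(p)$, as recalled in Section~\ref{se:prelim}.
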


\begin{proof} From $p\in\Pcal_5\setminus H_5$ and $p\in G$ it follows that $G\not\subset
H_5$. Now $\pi_5(p)\cap H_5=:L_4$ is a Lie subquadric of $H_5$ and therefore
$\spn(L_4)=\pi_5(p)$. This shows the existence of a point $x\in L_4$ that is
not incident with the solid $\pi_5(G)$. Applying $\pi_5$ shows that $x$ has the
required properties.
\end{proof}

We proceed with our first proof.

\begin{proof}[Proof of Theorem~\emph{\ref{thm:konstr}}]
Since all planes of $\Ecal_D$ are external to $H_5$, all lines of $\Hcal$ are
$0$-secants of $H_5$. There is a point $v_1\in D$, say. We read off from
\eqref{eq:konstr.E_D} that $D\subset f(v_1)$, whence \eqref{eq:konstr} shows
$D\in\Lcal[v_1,f(v_1)]$. This gives
\begin{equation}\label{eq:WinH}
    D\in\Hcal .
\end{equation}
\par
Next, choose any tangent hyperplane of $H_5$, say $\tau$. From
Lemma~\ref{lem:A}, no plane of $\Ecal_D$ is contained in $\tau$, that is,
\begin{equation}\label{eq:schnitt}
    \tau\cap\varepsilon \mbox{ is a line for all } \varepsilon\in\Ecal_D .
\end{equation}
\par
If $D\subset\tau$, then by \eqref{eq:schnitt}, $\tau\cap\varepsilon=D$ for all
$\varepsilon\in \Ecal_D$. Using \eqref{eq:WinH}, we now see that $D$ is the
only line of $\Hcal$ that is incident with $\tau$.
\par
If $D\not\subset\tau$, then $\tau\cap D$ is a point, say $p$. From
\eqref{eq:konstr.E_D}, for all $v\in D\setminus\{p\}$ there is a unique line of
$\Lcal[v,f(v)]$ passing through $p$, namely the line $D$, which also is an
element of $\Lcal[p,f(p)]$. Therefore, \eqref{eq:konstr} gives
\begin{equation}\label{eq:kegel}
    \Lcal[p,f(p)] = \{X\in\Hcal\mid p\in X\}.
\end{equation}
From \eqref{eq:schnitt}, $\tau\cap f(p)$ is a line incident with $\tau$. More
precisely, $\tau\cap f(p)$ is the only line of the pencil
$\Lcal[p,f(p)]\subset\Hcal$ lying in $\tau$. According to \eqref{eq:kegel}, all
lines of $\Hcal\setminus\Lcal[p,f(p)]$ contain some point of $D$ other than
$p$; therefore none of these lines is contained in $\tau$. Hence $\tau\cap
f(p)$ is the only line of $\Hcal$ being incident with $\tau$.
\par
To sum up, we have shown that $\Hcal$ is an hfd line set that, by its
definition, is pencilled.
\end{proof}

In the next four lemmas we adopt the assumptions and notations from
Theorem~\ref{thm:main}: $\Hcal\subset\Lcal_5$ is a pencilled hfd line set,
$\Vcal$ is the set of all vertices, and $\Kcal$ is the set of all planes of the
pencils in $\Hcal$.

\begin{lem}\label{lem:VK}
The following hold:\/ {\rm (i)}~$|\Kcal|\geq 1$; {\rm (ii)}~$|\Vcal|\geq 2$.
\end{lem}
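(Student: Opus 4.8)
The plan is to argue that a pencilled hfd line set cannot be ``too small'', essentially because every tangent hyperplane must capture exactly one of its lines, and a single pencil does not by itself meet every tangent hyperplane. First I would fix any line $X\in\Hcal$; since $\Hcal$ is pencilled, $X$ lies in at least one pencil of $\Hcal$, so $\Hcal$ contains a pencil $\Lcal[v,\kappa]$ with $v\in X\subset\kappa$. This already exhibits one vertex $v\in\Vcal$ and one plane $\kappa\in\Kcal$, so both sets are non-empty; the real content is the strict inequalities $|\Vcal|\ge2$ and $|\Kcal|\ge1$ (the latter being immediate) — wait, more precisely we must rule out $|\Vcal|=1$.

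For the bound $|\Vcal|\ge 2$, suppose toward a contradiction that $\Vcal=\{v\}$ consists of a single vertex. Then every pencil of $\Hcal$ has vertex $v$, so $\Hcal=\bigcup_{\kappa\in\Kcal}\Lcal[v,\kappa]$, and in particular every line of $\Hcal$ passes through the point $v$. The key step is to produce a tangent hyperplane of $H_5$ that contains \emph{no} line of $\Hcal$, contradicting the hfd property. To do this I would distinguish the cases $v\in H_5$ and $v\notin H_5$. If $v\notin H_5$, then by Corollary~\ref{cor:Gerade} applied to the $0$-dimensional subspace $\{v\}$ — or directly — $v$ lies on a tangent hyperplane $\pi_5(x)$ for some $x\in H_5$; but I need one that contains \emph{no} line of $\Hcal$ through $v$. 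Since every line of $\Hcal$ lies in some plane $\kappa\in\Kcal$ with $v\in\kappa$, and each such $\kappa$ is external to $H_5$ (this will later be Theorem~\ref{thm:main}~\ref{thm:main.d}, which I should not assume, so I argue directly: a plane carrying a pencil of $0$-secants through a point off $H_5$ need not a priori be external, so instead I reason with Lemma~\ref{lem:A}). The cleaner route: each $\kappa\in\Kcal$ contains the $0$-secant $X$, hence contains no point of $H_5$ on $X$; but I want $\kappa\cap H_5=\emptyset$ fully. I would instead invoke Lemma~\ref{lem:A} with $S=\kappa$: if $\kappa$ were not external, it would meet $H_5$, and a short dimension count combined with the fact that $\kappa$ consists entirely of $0$-secants of $H_5$ through $v$ forces $\kappa\cap H_5$ to be at most a point — then handle that degenerate possibility separately. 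The main obstacle is precisely this: controlling the possible positions of the vertex $v$ and of the planes in $\Kcal$ relative to $H_5$ without yet having Theorem~\ref{thm:main}~\ref{thm:main.d} at our disposal.

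The cleanest way around the obstacle is to use Lemma~\ref{lem:flag}. Assume $\Vcal=\{v\}$. If $v\notin H_5$: pick any line $G\in\Hcal$, so $v\in G$ and $G$ is a $0$-secant; by Lemma~\ref{lem:flag} there is $x\in H_5$ with $v\in\pi_5(x)$ and $G\not\subset\pi_5(x)$. Now $\tau:=\pi_5(x)$ is a tangent hyperplane containing $v$; I claim it contains no line of $\Hcal$. Indeed any line of $\Hcal$ passes through $v\in\tau$, so if such a line $Y$ lay in $\tau$ then $Y$ would be one of the lines through $v$ in $\tau$; but for distinct lines of $\Hcal$ through $v$ I can repeat the Lemma~\ref{lem:flag} argument — actually I should choose $x$ more carefully: the lines of $\Hcal$ through $v$ form $\bigcup_{\kappa\in\Kcal}\Lcal[v,\kappa]$, a set of lines spanning a subspace $W\ni v$ with $v\notin H_5$; applying Lemma~\ref{lem:flag} repeatedly (or once, to a spanning line $G$ with an appropriate choice of $x$ avoiding the finitely many... no, possibly infinitely many planes) shows the set of ``bad'' $x\in H_5$ (those with some $\Hcal$-line through $v$ inside $\pi_5(x)$) is contained in $\pi_5(W)\cap H_5$, a proper subquadric of $L_4:=\pi_5(v)\cap H_5$; since $\spn(L_4)=\pi_5(v)$ properly contains $\pi_5(W)$, there is $x\in L_4\setminus\pi_5(W)$, and then $\tau=\pi_5(x)$ contains $v$ but no line of $\Hcal$ — contradiction. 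If instead $v\in H_5$: then $\tau_0:=\pi_5(v)$ is itself a tangent hyperplane of $H_5$ containing $v$, hence containing a line of $\Hcal$; that line passes through $v\in H_5$, so it is not a $0$-secant, contradicting that $\Hcal$ consists of $0$-secants. In either case we reach a contradiction, so $|\Vcal|\ge2$, and $|\Kcal|\ge1$ is clear from the first paragraph. \qed
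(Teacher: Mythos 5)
There is a genuine gap in your main case $v\notin H_5$. The containment you assert goes the wrong way: for a line $Y\subset W$ one has $\pi_5(Y)\supset\pi_5(W)$, so the set of ``bad'' points $x$ is the union $\bigcup_{Y}\bigl(\pi_5(Y)\cap H_5\bigr)$ over the $\Hcal$-lines $Y$ through $v$, which \emph{contains} $\pi_5(W)\cap H_5$ rather than being contained in it. Worse, the object you are trying to construct does not exist: since $\Hcal$ is pencilled and $\Vcal=\{v\}$, $\Hcal$ contains a whole pencil $\Lcal[v,\kappa]$, and any hyperplane $\tau$ with $v\in\tau$ either contains $\kappa$ or meets it in a line; in the latter case that line passes through $v\in\tau\cap\kappa$ and hence belongs to $\Lcal[v,\kappa]\subset\Hcal$. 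Equivalently, $\pi_5$ maps $\Lcal[v,\kappa]$ to the pencil of solids through $\pi_5(\kappa)$ inside $\pi_5(v)$, whose union is all of $\pi_5(v)$, so \emph{every} $x\in L_4=\pi_5(v)\cap H_5$ is bad. Hence no tangent hyperplane through $v$ can avoid all lines of $\Hcal$, and this branch of your argument cannot be repaired as stated. (Your case $v\in H_5$ is vacuous anyway, since $v$ lies on a line of $\Hcal$, which is a $0$-secant; your first paragraph giving $\Kcal\neq\emptyset$ and $\Vcal\neq\emptyset$ is fine.)

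The repair is simply to drop the restriction $v\in\tau$, which is what the paper does: if $|\Vcal|=1$, every line of $\Hcal$ passes through $v$, so any tangent hyperplane \emph{not} containing $v$ contains no line of $\Hcal$, contradicting the hfd property; such a tangent hyperplane exists because the tangent hyperplanes $\pi_5(x)$, $x\in H_5$, have empty intersection ($H_5$ spans $\PR5$). The paper phrases this as: $v$ would have to lie in all tangent hyperplanes of $H_5$, an absurdity. In particular, the appeal to Lemma~\ref{lem:flag} is unnecessary once this restriction is removed.
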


\begin{proof}
$\Kcal\not=\emptyset$ and $\Vcal\neq\emptyset$ are immediate from the
definition of a pencilled hfd line set and the fact that tangent hyperplanes of
$H_5$ do exist. Next, assume to the contrary that $|\Vcal|<2$. So, from
$\Vcal\neq\emptyset$, we obtain $|\Vcal|=1$. This implies that all lines of
$\Hcal$ share a common point $v\in\Vcal$, say. Since $\Hcal$ is an hfd line
set, the point $v$ belongs to all tangent hyperplanes of $H_5$, an absurdity.
\end{proof}

\begin{lem}\label{lem:C}
If $G_1,G_2\in\Hcal$ are distinct coplanar lines, then the plane $G_1\vee G_2$
is external to the Klein quadric $H_5$.
\end{lem}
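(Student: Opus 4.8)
\textbf{Proof plan for Lemma~\ref{lem:C}.}
Let $G_1,G_2\in\Hcal$ be distinct coplanar lines, write $\varepsilon:=G_1\vee G_2$ for the plane they span, and let $p:=G_1\cap G_2$. The plan is to show that the assumption ``$\varepsilon$ meets $H_5$'' contradicts the hfd property of $\Hcal$. So suppose $\varepsilon\cap H_5\neq\emptyset$. Since $G_1,G_2$ are $0$-secants of $H_5$ (being lines of an hfd line set), any point $z\in\varepsilon\cap H_5$ lies on neither $G_1$ nor $G_2$; in particular $p\notin H_5$.

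The key step is to produce, inside the tangent hyperplane $\pi_5(z)$, \emph{two} distinct lines of $\Hcal$, which is forbidden. I would argue as follows. Because $z\in H_5$ and $z\in\varepsilon$, the whole plane $\varepsilon$ is \emph{not} external to $H_5$, so by Lemma~\ref{lem:A} (applied with $S=\varepsilon$, where a plane lies on $H_5$ iff it is contained in a tangent hyperplane) we cannot immediately conclude that $\varepsilon\subset\pi_5(z)$; but we can say more directly that the plane $\varepsilon$ and the tangent hyperplane $\pi_5(z)$ both contain $z$. The cleanest route is: consider the pencil $\Lcal[p,\varepsilon]$ of lines through $p$ in the plane $\varepsilon$; it contains $G_1$ and $G_2$. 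Since $z\in\varepsilon$ and $z\ne p$, the line $p\vee z\subset\varepsilon$ passes through $z\in H_5$, so $p\vee z$ is a $1$- or $2$-secant, hence $p\vee z\notin\Hcal$. In particular $p\vee z\notin\{G_1,G_2\}$, so $G_1,G_2$ are two distinct lines of $\varepsilon$ not equal to $p\vee z$. Now I claim at least one of $G_1,G_2$ lies in $\pi_5(z)$ — and in fact I will get both into a single tangent hyperplane. For this, note $z\in G_i^{\pi_5}$ would say $G_i\subset\pi_5(z)$; whether or not this holds, consider instead the point $z':=$ the intersection of $\varepsilon$ with $\pi_5(z)$, which is a line of $\varepsilon$ through $z$ (as $\pi_5(z)$ is a hyperplane meeting the plane $\varepsilon$ in at least a line, and it does contain $z$). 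Among the lines of $\varepsilon$ through $p$, exactly those lying in $\pi_5(z)$ form a subpencil; since $\pi_5(z)\cap\varepsilon$ is a line $\ell$ and $p\notin\ell$ is not automatic, I need to split into the two cases $p\in\pi_5(z)$ and $p\notin\pi_5(z)$.

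Case $p\notin\pi_5(z)$: then $\pi_5(z)\cap\varepsilon=\ell$ is a line not through $p$, so $\ell$ meets $G_1$ in a point $q_1$ and $G_2$ in a point $q_2$ with $q_1\ne q_2$ (as $G_1\ne G_2$ and neither equals $\ell$, and they already meet at $p\notin\ell$). Both $G_1$ and $G_2$ meet the hyperplane $\pi_5(z)$; but that alone does not put them \emph{inside} $\pi_5(z)$. This is the genuine obstacle, and it forces the sharper observation: $z\in G_1\vee G_2$ together with $z\in H_5$ means $z=z^{\pi_5}\cap\dots$; better, apply $\pi_5$ to get $\pi_5(z)\supset\pi_5(\varepsilon)$, a plane, and note $G_i\subset\pi_5(z)\iff z\in\pi_5(G_i)$, a solid. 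Case $p\in\pi_5(z)$: then the line $p\vee z\subset\pi_5(z)\cap\varepsilon$, and since $z\in H_5$ and $z\in\pi_5(z)=z^{\perp}$ trivially, we in fact get $\pi_5(z)\cap\varepsilon$ is a line through $p$. Now the crux: \emph{both} $G_1$ and $G_2$ lie in $\pi_5(z)$. To see this, recall $G_1,G_2$ are $0$-secants, so $G_1\cap H_5=G_2\cap H_5=\emptyset$, while $z\in\varepsilon\cap H_5$; the orthogonality form restricted to $\varepsilon$ has $z$ as an isotropic point and $p$ as an anisotropic point with $p\in z^{\perp}$, and a plane with an isotropic point and a non-isotropic point in its perp has the perp of $z$ within $\varepsilon$ equal to the line $p\vee z$ — but we need the perp \emph{in $\PR5$}, i.e.\ whether $G_i\subset z^{\perp}$. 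The resolution: pick $z$ to be the \emph{unique} point of $G_1\vee G_2$ on $H_5$ in the right configuration, or rather, choose the point $x\in H_5$ cleverly via Lemma~\ref{lem:flag}.

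So the expected main obstacle is exactly this: merely intersecting $\varepsilon$ with a tangent hyperplane does not force $G_1$ or $G_2$ to \emph{lie inside} that hyperplane. I expect the clean fix is to take $z\in\varepsilon\cap H_5$ and then, since the bilinear form on the plane $\varepsilon$ has $z$ isotropic, the radical line through $z$ in $\varepsilon$ — namely $z^{\perp}\cap\varepsilon$ — is a line $m\subset\varepsilon$ with $m\subset\pi_5(z)$; as $\varepsilon$ is not external, its restricted form is degenerate or isotropic, and one shows every line of $\varepsilon$ through a suitable point lies in $z^{\perp}$. Concretely: if $\varepsilon$ is \emph{totally singular along a line} that's impossible since $G_1$ is a $0$-secant; so $\varepsilon\cap H_5$ is a single point or a conic; in the single-point case $z$, the tangent line to $\varepsilon\cap H_5$ is undefined, so $\varepsilon\cap H_5=\{z\}$ with the form on $\varepsilon$ having a $1$-dimensional radical or being anisotropic-plus-one-isotropic — in all subcases $z^{\perp}\cap\varepsilon$ is a line $m$ through $z$, hence $m\ne G_1,G_2$ (those miss $H_5$), and then the \emph{two} lines $G_1\vee z$ is wrong... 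Rather, one picks the point $p=G_1\cap G_2$ and the line $m=z^{\perp}\cap\varepsilon$: then $p\vee(\text{any point of }m)$ gives lines through $p$ in $\varepsilon$ lying in $\pi_5(z)$, and since a pencil in $\varepsilon$ through $p$ has all its members meeting $m$, we get \emph{both} $G_1$ and $G_2$ inside $\pi_5(z)$ provided $p\in z^{\perp}$. If instead $p\notin z^{\perp}$, then exactly one line of the pencil at $p$ lies in $\pi_5(z)$; but that single line is $p\vee(z^{\perp}\cap\varepsilon\cap\,?)$ — and one checks it cannot be both $G_1$ and $G_2$, yet we still must rule out that it is \emph{neither}. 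Here is where we finally use the hfd property the other way: the tangent hyperplane $\pi_5(z)$ contains a unique line of $\Hcal$; if $p\notin\pi_5(z)$ then neither $G_1$ nor $G_2$ is that line, so some \emph{third} pencil vertex handles $\pi_5(z)$ — no contradiction yet — so one must instead choose $z$ with $p\in\pi_5(z)$, which is possible because $\pi_5(p)\cap\varepsilon$ is a line of $\varepsilon$ and $\varepsilon\cap H_5\neq\emptyset$ forces, by a dimension count on the quadric restricted to $\varepsilon$, a point $z\in H_5\cap\varepsilon\cap\pi_5(p)$ (the form on $\varepsilon$ being isotropic, its restriction to the line $\pi_5(p)\cap\varepsilon$ either is isotropic — giving such $z$ — or anisotropic, the latter forcing $z^{\perp}\cap\varepsilon$ elsewhere, handled symmetrically). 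With such a $z$ we have $p\in\pi_5(z)$ and $z\in\pi_5(p)$, so $G_1\vee G_2=\varepsilon\ni z$ yields $\varepsilon\subset\pi_5(z)$ once we know $z^{\perp}\supset\varepsilon$, i.e.\ $z\in\varepsilon^{\perp}$; and $z\in\varepsilon^{\perp}=\pi_5(\varepsilon)$ holds iff $z$ is in the radical of the form on $\varepsilon$, which — since $\varepsilon$ meets $H_5$ but contains the $0$-secants $G_1,G_2$ through $p\notin H_5$ — is exactly the generic situation. Then $\varepsilon\subset\pi_5(z)$, so $G_1,G_2$ are two distinct lines of $\Hcal$ in the tangent hyperplane $\pi_5(z)$, contradicting that $\Hcal$ is an hfd line set. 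Therefore $\varepsilon$ is external to $H_5$.
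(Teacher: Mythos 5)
Your overall strategy is the right one (assume a point $z\in\varepsilon\cap H_5$ with $\varepsilon:=G_1\vee G_2$ and derive a contradiction with the hfd property by trapping the two distinct lines $G_1,G_2\in\Hcal$ in a single tangent hyperplane), but the decisive step is never actually carried out. You insist throughout on using the tangent hyperplane $\pi_5(z)$ \emph{at the point $z$ itself}, and you correctly identify the obstacle that $z\in\varepsilon$ does not force $\varepsilon\subset\pi_5(z)$; however, your attempted fixes do not close this gap. The final claim that $z\in\pi_5(\varepsilon)$ ``is exactly the generic situation'' is not an argument, and in fact the needed containment can simply fail: the plane $\varepsilon$ may meet $H_5$ in a nondegenerate conic while still containing two $0$-secants through a common point (this happens, e.g., over $\RR$), and then for every $z\in\varepsilon\cap H_5$ the intersection $\pi_5(z)\cap\varepsilon$ is only the tangent line of that conic at $z$, never the whole plane. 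So the contradiction is never reached along the route you propose.

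The tool that repairs this is one you already cite but misuse: Lemma~\ref{lem:A}, applied with $S=\varepsilon$, says that $\varepsilon$ is contained in \emph{some} tangent hyperplane of $H_5$ if and only if $\varepsilon\cap H_5$ contains a subspace of dimension $\geq \dim\varepsilon-2=0$, i.e.\ a point. That is exactly the paper's proof: since $G_1\neq G_2$ both lie in $\Hcal$, the hfd property forbids any tangent hyperplane $\tau$ with $\varepsilon\subset\tau$; hence, by Lemma~\ref{lem:A}, the only subspace contained in $\varepsilon\cap H_5$ is $\emptyset$, so $\varepsilon\cap H_5=\emptyset$. The point you were missing is hidden in the converse direction of Lemma~\ref{lem:A}: from a point $z\in\varepsilon\cap H_5$ one takes a plane $\mu\subset H_5$ through $z$; then $\varepsilon\vee\mu$ has dimension at most $4$, so it lies in a hyperplane, and that hyperplane is tangent because it contains the plane $\mu$ of $H_5$ --- it is tangent at some point of $\mu$, in general \emph{not} at $z$. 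Replacing your fixation on $\pi_5(z)$ by this hyperplane immediately yields two distinct lines of $\Hcal$ in one tangent hyperplane, the desired contradiction.
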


\begin{proof}
From the definition of an hfd line set, we deduce that there exists no tangent
hyperplane $\tau $ of $H_5$ with $G_1\vee G_2\subset\tau $. Now we apply
Lemma~\ref{lem:A} to $\varepsilon:=G_1\vee G_2$ and obtain that $\emptyset$ is
the only subspace of $\PR5$ being contained in $\varepsilon\cap H_5$. Therefore
$\varepsilon\cap H_5=\emptyset$.
\end{proof}

\begin{lem}\label{lem:D}
Let $\Lcal[v,\kappa]\subset\Hcal$ be a pencil. Then
\begin{equation*}
    \Lcal[v,\kappa] = \{X\in\Hcal\mid v\in X\} .
\end{equation*}
\end{lem}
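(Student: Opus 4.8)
The plan is to show that any line of $\Hcal$ through $v$ must lie in the pencil $\Lcal[v,\kappa]$; the reverse inclusion is immediate since $\Lcal[v,\kappa]\subset\Hcal$ by hypothesis. So suppose, for contradiction, that there is a line $G\in\Hcal$ with $v\in G$ but $G\notin\Lcal[v,\kappa]$. Then $G$ and any line $K$ of the pencil $\Lcal[v,\kappa]$ are two distinct lines of $\Hcal$ meeting at $v$; in particular they are coplanar, so by Lemma~\ref{lem:C} the plane $G\vee K$ is external to $H_5$. In fact, since $G\not\subset\kappa$, varying $K$ over the pencil shows that $G\vee\kappa$ is a solid (a $3$-space) containing the external plane $\kappa$ together with the extra point $G\setminus\{v\}$-direction.

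Now I would invoke Lemma~\ref{lem:A} to produce a contradiction. The key point is that $v\notin H_5$ (because $v$ lies on the external plane $\kappa$), so the solid $S:=G\vee\kappa$ satisfies $S\cap H_5$ being quite small. Concretely, I want to find a tangent hyperplane $\tau$ of $H_5$ that contains \emph{two} distinct lines of $\Hcal$, contradicting the defining property of an hfd line set. The natural candidate is a tangent hyperplane through $v$ chosen so that it contains $G$ but also meets the pencil $\Lcal[v,\kappa]$ in one of its lines. Since $v\notin H_5$ and $v\in G$, Lemma~\ref{lem:flag} gives a point $x\in H_5$ with $v\in\pi_5(x)$ and $G\not\subset\pi_5(x)$; that is not quite what I want, so instead I would argue directly: the hyperplanes of $\PR5$ through $v$ that are tangent to $H_5$ are exactly the $\pi_5(x)$ with $x\in\pi_5(v)\cap H_5=:L_4$, a Lie subquadric spanning $\pi_5(v)$. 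I need such an $x$ with $G\subset\pi_5(x)$ and $\kappa\not\subset\pi_5(x)$ (so that $\tau:=\pi_5(x)$ contains the line $G$ and the line $\tau\cap\kappa\in\Lcal[v,\kappa]$, which are distinct because $G\notin\Lcal[v,\kappa]$).

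The condition $G\subset\pi_5(x)$ means $x\in\pi_5(G)$, a solid; the condition $\kappa\not\subset\pi_5(x)$ means $x\notin\pi_5(\kappa)$, which by Lemma~\ref{lem:extern} is an external plane. So I must exhibit $x\in L_4\cap\pi_5(G)$ with $x\notin\pi_5(\kappa)$. Since $v\in G$ we have $\pi_5(G)\subset\pi_5(v)$, hence $L_4\cap\pi_5(G)=H_5\cap\pi_5(G)$, and because $G\not\subset H_5$ (as $v\notin H_5$), this is a Lie-type subquadric of the solid $\pi_5(G)$, in particular it spans $\pi_5(G)$ and is certainly not contained in the plane $\pi_5(\kappa)$ (a plane cannot contain a spanning set of a solid). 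Therefore a suitable $x$ exists, $\tau:=\pi_5(x)$ is a tangent hyperplane containing the two distinct lines $G$ and $\tau\cap\kappa$ of $\Hcal$, contradicting that $\Hcal$ is an hfd line set. This forces $G\in\Lcal[v,\kappa]$, proving $\{X\in\Hcal\mid v\in X\}\subseteq\Lcal[v,\kappa]$ and hence equality.

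The main obstacle I anticipate is the bookkeeping in the last paragraph: one must be careful that $\tau\cap\kappa$ really is a line (which follows from $\kappa\not\subset\tau$, i.e.\ $x\notin\pi_5(\kappa)$, plus $\kappa\not\subset H_5$ via Lemma~\ref{lem:A}) and that this line genuinely lies in $\Hcal$ (it does, being a member of the pencil $\Lcal[v,\kappa]\subset\Hcal$), and that it is distinct from $G$ (guaranteed by the assumption $G\notin\Lcal[v,\kappa]$, since $\tau\cap\kappa$ does lie in $\Lcal[v,\kappa]$). Everything else is a routine dimension count in $\PR5$ together with the three auxiliary lemmas already proved.
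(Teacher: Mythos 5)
Your proof is correct, but it follows a genuinely different route from the paper's. The paper argues globally: by Lemma~\ref{lem:C} every plane $X\vee G$ with $X\in\Lcal[v,\kappa]$ is external to $H_5$, so the whole solid $G\vee\kappa$ contains no point of $H_5$; then Corollary~\ref{cor:Gerade}, applied to the line $\pi_5(G\vee\kappa)$, yields a tangent hyperplane $\pi_5(q)$ containing $\pi_5(G\vee\kappa)$, whence $q\in(G\vee\kappa)\cap H_5$, a contradiction. You instead contradict the hfd property directly, by exhibiting a tangent hyperplane $\tau=\pi_5(x)$ containing the two distinct lines $G$ and $\tau\cap\kappa$ of $\Hcal$; for this you never actually need Lemma~\ref{lem:C} or Lemma~\ref{lem:extern} (only that $\pi_5(\kappa)$ is a plane). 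The one step you should justify more carefully is the existence of $x\in\bigl(H_5\cap\pi_5(G)\bigr)\setminus\pi_5(\kappa)$: calling $H_5\cap\pi_5(G)$ a ``Lie-type'' subquadric merely because $G\not\subset H_5$ is not the right reason, and knowing that $L_4$ spans $\pi_5(v)$ does not by itself show that $L_4\cap\pi_5(G)$ spans $\pi_5(G)$. The clean argument is that $G\in\Hcal$ is a $0$-secant, so by the surjectivity of $\gamma$ in \eqref{eq:Passantenabb} one has $\pi_5(G)=\spn\lambda(\Ccal)$ for some regular spread $\Ccal$, hence $H_5\cap\pi_5(G)$ is an elliptic subquadric spanning the solid $\pi_5(G)$ and therefore cannot be contained in the plane $\pi_5(\kappa)$; with that reference your argument is complete. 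As for what each approach buys: the paper's proof stays entirely within its three auxiliary lemmas and needs no information about the section $\pi_5(G)\cap H_5$, while yours is more explicit, naming the tangent hyperplane that would violate the uniqueness in the definition of an hfd line set, at the price of importing the $0$-secant/elliptic-subquadric correspondence from Section~\ref{se:prelim}.
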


\begin{proof}
Assume, by way of contradiction, that there exists a line $G\in\Hcal$
satisfying $v\in G$ and $G\not\subset\kappa$. Then $X\vee G$ is an external
plane to $H_5$ for all $X\in\Lcal[v,\kappa]$ according to Lemma~\ref{lem:C}.
This implies that $G\vee\kappa$, which has dimension $3$, contains no point of
$H_5$. On the other hand, by Corollary~\ref{cor:Gerade}, there is a point $q\in
H_5$ such that the tangent hyperplane $\pi_5(q)$ contains the line
$\pi_5(G\vee\kappa)$. This means $q\in (G\vee\kappa)\cap H_5$, an absurdity.
\end{proof}

\begin{lem}\label{lem:E}
Let $\Lcal[v_1,\kappa_1]$ and $\Lcal[v_2,\kappa_2]$ be distinct pencils of
lines that belong to $\Hcal$. Then the following hold:\/ {\rm
(i)}~$v_1\not=v_2$; {\rm (ii)}~$v_1\vee v_2\subset\kappa_1\cap\kappa_2$; {\rm
(iii)}~$v_1\vee v_2\in\Hcal$.
\end{lem}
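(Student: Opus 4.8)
\textbf{Proof proposal for Lemma~\ref{lem:E}.}
The plan is to establish the three assertions in sequence, using Lemmas~\ref{lem:D} and~\ref{lem:C} together with the hfd property. For (i), suppose $v_1=v_2=:v$. Then both pencils consist of lines of $\Hcal$ through $v$, so Lemma~\ref{lem:D} applied to each of them gives $\Lcal[v,\kappa_1]=\{X\in\Hcal\mid v\in X\}=\Lcal[v,\kappa_2]$, forcing $\kappa_1=\kappa_2$ and contradicting that the two pencils are distinct.

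For (ii) and (iii), the idea is to produce a single line of $\Hcal$ joining the two vertices and then show it lies in both carrier planes. Pick any line $G_1\in\Lcal[v_1,\kappa_1]\subset\Hcal$. If $v_2\in G_1$ we already have a line of $\Hcal$ through both vertices; otherwise $G_1\vee v_2$ is a plane, and within $\kappa_2$ there is a line through $v_2$ — but to get membership in $\Hcal$ I want to invoke the hfd property directly: choose a tangent hyperplane $\tau$ of $H_5$ containing the line $v_1\vee v_2$, which exists by Corollary~\ref{cor:Gerade}. By the hfd condition $\tau$ contains exactly one line $G\in\Hcal$. Since $G\in\Hcal$, it lies in some pencil $\Lcal[w,\lambda]\subset\Hcal$ with $w\in G\subset\tau$. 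Now I claim $G=v_1\vee v_2$: the pencil $\Lcal[v_1,\kappa_1]$ has all its lines through $v_1$, and $\tau\cap\kappa_1$ is a line of that pencil lying in $\tau$ (it is a line, not a plane, by Lemma~\ref{lem:A} since $\kappa_1$ is external, using Theorem~\ref{thm:main}~\ref{thm:main.d}); by uniqueness this forces $G=\tau\cap\kappa_1$, so $v_1\in G$. Symmetrically $v_2\in G$. Hence $G$ is the unique line through the two distinct points $v_1,v_2$, i.e. $G=v_1\vee v_2$, giving (iii). Moreover $G\subset\kappa_1$ and $G\subset\kappa_2$ from the two identifications $G=\tau\cap\kappa_i$, which is exactly (ii): $v_1\vee v_2\subset\kappa_1\cap\kappa_2$.

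The step I expect to require the most care is verifying that $\tau\cap\kappa_i$ is genuinely a line of the pencil $\Lcal[v_i,\kappa_i]$ and that the uniqueness clause of the hfd property can be leveraged to pin down $G$. Concretely: since $\kappa_i$ is external to $H_5$ (Theorem~\ref{thm:main}~\ref{thm:main.d}), Lemma~\ref{lem:A} shows $\kappa_i\not\subset\tau$, so $\tau\cap\kappa_i$ is a line; it passes through $v_i$ because $v_i\in\tau$ (as $v_i\in v_1\vee v_2\subset\tau$) and $v_i\in\kappa_i$; hence $\tau\cap\kappa_i\in\Lcal[v_i,\kappa_i]\subset\Hcal$ is a line of $\Hcal$ contained in $\tau$. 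Since $\tau$ contains exactly one line of $\Hcal$, the lines $\tau\cap\kappa_1$, $\tau\cap\kappa_2$, and the line $G$ supplied by the hfd property must all coincide, and the argument closes. One should also note at the outset that $v_1\vee v_2$ is indeed a line, which is guaranteed by part (i).
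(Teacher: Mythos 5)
Your proof is correct and follows essentially the same route as the paper: part (i) via Lemma~\ref{lem:D}, and parts (ii), (iii) by choosing a tangent hyperplane $\tau\supset v_1\vee v_2$ (Corollary~\ref{cor:Gerade}), showing each $\tau\cap\kappa_i$ is a line of $\Hcal$ through $v_i$, and using the uniqueness clause of the hfd property to identify both intersections with $v_1\vee v_2$. The only divergence is your justification of $\kappa_i\not\subset\tau$ via Theorem~\ref{thm:main}~\ref{thm:main.d} and Lemma~\ref{lem:A} — this is not circular, since that part of the theorem is proved independently of this lemma, but the paper obtains it more directly from the hfd condition (if $\kappa_i\subset\tau$ then $\tau$ would contain the whole pencil $\Lcal[v_i,\kappa_i]\subset\Hcal$, not just one line), and you could also stay within the preceding lemmas by deducing externality of $\kappa_i$ from Lemma~\ref{lem:C}.
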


\begin{proof}
(i) $v_1=v_2$ would imply $\kappa_1\not=\kappa_2$, which would contradict
Lemma~\ref{lem:D}.
\par
(ii) \emph{and} (iii). By Corollary~\ref{cor:Gerade}, there is a tangent
hyperplane $\tau$ of $H_5$ such that $v_1\vee v_2\subset\tau$. Since $\Hcal$ is
an hfd line set, this $\tau$ cannot contain any of the planes $\kappa_i$,
$i=1,2$. Therefore each of the intersections $\tau\cap\kappa_i$ is a line,
which clearly passes through $v_i$ and hence belongs to $\Hcal$. Since $\tau$
is incident with a unique line of $\Hcal$, we finally obtain $\tau\cap\kappa_1
= \tau\cap\kappa_2=v_1\vee v_2\in\Hcal$.
\end{proof}

We are now in a position to prove the Main Theorem~\ref{thm:main}.

\begin{proof}[Proof of Theorem~\emph{\ref{thm:main}}]
\ref{thm:main.d} Given any plane $\kappa\in\Kcal$ there is a point
$v_\kappa\in\Vcal$ with $\Lcal[v_\kappa,\kappa]\subset\Hcal$. As all lines of
the pencil $\Lcal[v_\kappa,\kappa]$ are external to the Klein quadric, so is
the plane $\kappa$.

\ref{thm:main.a} Taking into account Lemma~\ref{lem:E}, we define a mapping
$h:\Vcal\to \Kcal$ as follows: For each $v\in\Vcal$ there is a unique plane
$\kappa$ with $\Lcal[v,\kappa]\subset\Hcal$, and so we let $h(v)=\kappa$.
Lemma~\ref{lem:D} shows that $h$ satisfies \eqref{eq:main.a}. By the definition
of $\Kcal$, the mapping $h$ is surjective.

\ref{thm:main.b} There exist non-collinear vertices $v_1,v_2,v_3 \in \Vcal$
spanning a plane, say $\Delta$. By \ref{thm:main.a}, there are well defined
planes $h(v_1), h(v_2),h(v_3)\in\Kcal$. For all $i,j,k$ with
$\{i,j,k\}=\{1,2,3\}$ both lines $v_i\vee v_j$ and $v_i\vee v_k$ are incident
with the plane $h(v_i)$ according to \eqref{eq:main.a}, hence
\begin{equation}\label{eq:Delta_neu}
    \Delta=h(v_1)=h(v_2)=h(v_3) .
\end{equation}
\par
Let $G$ be an arbitrary line of $\Hcal$. As $\Hcal$ is pencilled, so there
exists a pencil $\Lcal[v_G,h(v_G)]\subset\Hcal$ with $G\in\Lcal[v_G,h(v_G)]$.
Without loss of generality, we may assume that $v_1,v_2,v_G$ form a triangle.
Using \eqref{eq:main.a}, we deduce as above: $h(v_1)=h(v_2)=h(v_G)$. Therefore
and by \eqref{eq:Delta_neu}, $G\subset h(v_G)=\Delta$. Consequently, $\Hcal$ is
contained in the plane of lines in $\Delta$.
\par
Conversely, let $F$ be a line of $\Delta$. By Corollary~\ref{cor:Gerade}, there
is a tangent hyperplane $\tau$ of $H_5$ containing $F$. From
\eqref{eq:Delta_neu} and \ref{thm:main.d}, the plane $\Delta$ is external to
$H_5$. Now Lemma~\ref{lem:A} shows that $\Delta\not\subset\tau$. This means
that $F=\tau\cap\Delta$. Since all lines of $\Hcal$ are incident with the plane
$\Delta$ and $\tau$ is incident with one of these, we obtain $F\in\Hcal$.
\par
Summing up, $\Hcal$ is the set of lines in the plane $\Delta$, whence
$\Vcal=\Delta$ and $\Kcal=\{\Vcal\}$.
\par
\ref{thm:main.c} By Lemma~\ref{lem:VK}~(ii), there are distinct points
$v_1,v_2\in\Vcal$, whence $D:=v_1\vee v_2$ is the only line containing $\Vcal$.
Let $p\in D$ be an arbitrary point. Lemma~\ref{lem:E}~(iii) shows $v_1\vee
v_2=D\in\Hcal\subset\Zcal$, and so $p\notin H_5$. Lemma~\ref{lem:flag} implies
that there exists a tangent hyperplane $\tau$ of $H_5$ with $p\in\tau$ and
$D\not\subset\tau$; hence $D\cap\tau=\{p\}$. By the properties of an hfd line
set, there exists a line of $\Hcal$ in $\tau$ and, consequently, some vertex
$v_\tau\in\Vcal$ lies in $\tau$. Since $\Vcal\subset D$, we obtain
$p=v_\tau\in\Vcal$, that is, $\Vcal=D\in\Hcal$.
\par
Now we establish that
\begin{equation}\label{eq:fast_fertig}
    \Vcal = D = \bigcap_{\kappa\in \Kcal}\kappa .
\end{equation}
From \ref{thm:main.a}, the mapping $h$ is surjective. So, given any plane
$\kappa\in\Kcal$ there is a point $v_\kappa\in D$ with $h(v_\kappa)=\kappa$. By
the foregoing, we have $v_\kappa\in D\in\Hcal$. Thus $D\subset\kappa$ follows
from Lemma~\ref{lem:D}. There is a plane $\kappa_1\in\Kcal$ according to
Lemma~\ref{lem:VK}~(i). We cannot have $\Kcal=\{\kappa_1\}$, since then, by
\ref{thm:main.a}, we would obtain
\begin{equation*}
    \Hcal = \bigcup_{v\in\Vcal} \Lcal[v,h(v)] = \bigcup_{v\in\Vcal} \Lcal[v,\kappa_1],
\end{equation*}
that is, $\Hcal$ would comprise all lines in $\kappa_1$, which in turn would
imply that $\Vcal=D=\kappa_1$, a contradiction to the collinearity of $\Vcal$.
So, there are distinct planes $\kappa_1,\kappa_2\in\Kcal$. Hence
$D=\kappa_1\cap\kappa_2$, which verifies \eqref{eq:fast_fertig} and implies
$|\Kcal|\geq 2$.
\par
\ref{thm:main.e} If $\Vcal$ is collinear, then \eqref{eq:fast_fertig} applies,
otherwise the assertion is obvious from \ref{thm:main.b}.
\end{proof}

\begin{proof}[Proof of Proposition~\emph{\ref{prop:const}}]
Let $\Pbf$ be a pencilled regular parallelism of $\PR3$ such that
$\gamma(\Pbf)$ is a plane of lines; we denote this plane by $\kappa_1$. From
Lemma~\ref{lem:extern}, applied to $\kappa_1$, we obtain that $\pi_5(\kappa_1)$
is also external to $H_5$. Furthermore, by the action of $\pi_5$ on the lattice
of subspaces of $\PR5$, we obtain
\begin{equation}\label{eq:solids}
    \bigl\{ \spn\bigl(\lambda(\Ccal)\bigr)\mid \Ccal\in\Pbf \bigr\}
    = \bigl\{S\subset\Pcal_5\mid S
    \mbox{ is a solid and }
    \pi_5(\kappa_1)\subset S \bigr\}.
\end{equation}
This description of $\Pbf$ in terms of the Klein correspondence coincides with
the definition of a parallelism in \cite[Def.~4.2]{havl-16a}, which relies on
the choice of an external plane to $H_5$; in our context this distinguished
external plane is $\pi_5(\kappa_1)$. Finally, by \cite[Thm.~4.8]{havl-16a}, the
parallelism $\Pbf$ is Clifford.
\par
Conversely, let $\Pbf$ be Clifford. From \cite[Thm.~5.1]{havl-16a} there is an
external plane $\varepsilon_1$ to $H_5$ such that, in our present notation,
\eqref{eq:solids} holds with $\pi_5(\kappa_1)$ to be replaced by
$\varepsilon_1$. By the last observation, all parallel classes of $\Pbf$ are
regular spreads, that is, $\Pbf$ is regular. From \eqref{eq:Passantenabb}, the
polarity $\pi_5$ sends the set of solids of $\PR5$ that contain $\varepsilon_1$
to the hfd line set $\gamma(\Pbf)$, which therefore is the set of lines in the
plane $\pi_5(\varepsilon_1)$.
\end{proof}

The following lemma will be used in order to accomplish the proof of
Theorem~\ref{thm:Existenz}.

\begin{lem}\label{lem:1mach2}
In\/ $\PR5$, let $\varepsilon_1$ be an external plane to the Klein quadric
$H_5$. Then there exists a plane $\varepsilon_2$ that is external to $H_5$ and
such that $\varepsilon_1\cap\varepsilon_2$ is a line.
\end{lem}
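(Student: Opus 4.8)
Given the external plane $\varepsilon_1$, we want another external plane meeting it in a line. By Lemma~\ref{lem:extern} the polar plane $\varepsilon_1^* := \pi_5(\varepsilon_1)$ is also external to $H_5$; moreover $\varepsilon_1\cap\varepsilon_1^*=\emptyset$, since a common point $p$ of $\varepsilon_1$ and its polar would force $\varepsilon_1\subset\pi_5(p)$, and a plane lies on no line of $H_5$-tangent-hyperplane structure unless... more simply, $p\in\varepsilon_1^*=\pi_5(\varepsilon_1)$ means the whole of $\varepsilon_1$ is conjugate to $p$, in particular $p$ is conjugate to itself, i.e. $p\in H_5$, contradicting that $\varepsilon_1$ is external. (In characteristic $2$ one should phrase this via the bilinear form and check the argument still goes through; this is the kind of place a low-characteristic subtlety could hide.) So $\varepsilon_1$ and $\varepsilon_1^*$ are disjoint external planes spanning all of $\PR5$. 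The strategy is then to build $\varepsilon_2$ by taking a line inside $\varepsilon_1$ and joining it suitably to a point of $\varepsilon_1^*$, arranging externality.

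First I would pick any line $D\subset\varepsilon_1$. For each point $x\in\varepsilon_1^*$ the plane $D\vee x$ contains $D$, hence avoids $H_5$ in that part; what must be ruled out is a point of $H_5$ on $(D\vee x)\setminus D$, equivalently (since $D\vee x$ is a plane and $D$ is external) the existence of a point $z\in H_5$ with $z\in D\vee x$. Such a $z$ lies on a unique line through $x$ meeting $D$, so $z$ is determined by the choice of $x$ together with the point $y = (D\vee x)\cap D$ where the line $xz$ meets $D$... — rather than track this directly, I would count: the union $\bigcup_{x\in\varepsilon_1^*}(D\vee x)$ is the solid $D\vee\varepsilon_1^*$? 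No — $\dim(D\vee\varepsilon_1^*)=1+3+1=5$ since $D\cap\varepsilon_1^*\subseteq\varepsilon_1\cap\varepsilon_1^*=\emptyset$. So $D\vee\varepsilon_1^* = \PR5$, and the planes $D\vee x$, $x\in\varepsilon_1^*$, sweep out all of $\PR5$. That over-counts, so a direct ``not every plane meets $H_5$'' count needs care.

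The cleaner route: work inside the polar. Note $\pi_5(D)$ is a solid containing $\varepsilon_1^*$ (as $D\subset\varepsilon_1$ gives $\varepsilon_1^*=\pi_5(\varepsilon_1)\subset\pi_5(D)$). I would instead fix a point $v\in\varepsilon_1$ and look at planes through $v$ lying in the solid $\pi_5(v)\cap(\text{something})$ — but the genuinely efficient argument is: the set of planes of $\PR5$ that meet $H_5$ is not all planes, because $H_5$ is a hypersurface of degree $2$ and the Grassmannian of planes has dimension $9$ while those meeting $H_5$ form a proper subvariety — except this fails over small or arbitrary fields, which is exactly why the statement is nontrivial and why one cannot just invoke dimension. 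So the real plan is constructive and uses $\varepsilon_1$ itself: take a line $D\subset\varepsilon_1$ and a point $w\in\varepsilon_1^*$; form $\varepsilon_2 := D\vee w$; then $\varepsilon_2\cap\varepsilon_1 = D$ is a line, and I claim $w$ can be chosen so that $\varepsilon_2$ is external. For externality I would use that $\pi_5(\varepsilon_2) = \pi_5(D)\cap\pi_5(w) = $ a plane inside the solid $\pi_5(D)$; by Lemma~\ref{lem:extern} externality of $\varepsilon_2$ is equivalent to externality of $\pi_5(\varepsilon_2)$, and the latter plane lies in the solid $\pi_5(D)$ and passes through the line $\pi_5(\varepsilon_1\vee w)\cap\ldots$; one then shows $\pi_5(D)\cap H_5$ — which is the Klein intersection of a solid, hence a quadric surface $Q_3$ of projective index $0,1$ — cannot meet every plane of $\pi_5(D)$ through the fixed external line $\varepsilon_1^*\cap\pi_5(w)$...

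**Main obstacle.** The crux, and where I expect to spend the real effort, is producing a point $w\in\varepsilon_1^*$ (equivalently a plane through a fixed external line $D$ inside a fixed solid $T$) such that the resulting plane misses the quadric $T\cap H_5$; over an arbitrary field one cannot count, so this must come from the structure of $\varepsilon_1^*$ as an external plane: the plane $\varepsilon_1^*$ is itself external, and I would exploit that $D\vee\varepsilon_1^* = \PR5$ together with the fact that a point of $H_5$ on $D\vee w$ projects from $w$ onto a point of $D$, to show the ``bad'' set of $w\in\varepsilon_1^*$ is contained in the image of $H_5$ under a projection that cannot be surjective onto the external plane $\varepsilon_1^*$ — because surjectivity would put a point of $\varepsilon_1^*$ in the image, forcing a point of $\varepsilon_1^*$ to be collinear with a point of $D$ and a point of $H_5$ in a way incompatible with both $D$ and $\varepsilon_1^*$ being external and disjoint. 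Pinning down this last incompatibility cleanly, uniformly in the characteristic, is the hard part.
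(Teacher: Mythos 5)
Your write-up is a plan, not a proof: the step you yourself identify as ``the crux'' --- finding a point $w\in\pi_5(\varepsilon_1)$ (equivalently, a plane through a fixed $0$-secant $D$ inside a fixed solid) such that $D\vee w$ misses $H_5$ --- is never carried out, and nothing in the sketch (the projection-of-$H_5$ idea, the ``incompatibility'' you hope for) is made precise enough to see that it could be. Moreover, the one concrete claim your constructive route rests on is false in general: you argue $\varepsilon_1\cap\pi_5(\varepsilon_1)=\emptyset$ from ``$p$ conjugate to itself implies $p\in H_5$'', which is valid only for an orthogonal polarity, i.e.\ for $\Char\KK\neq 2$. In characteristic two $\pi_5$ is a null polarity, every point is self-conjugate, and (as Section~\ref{se:char=2} of the paper records) one actually has either $\varepsilon_1\cap\pi_5(\varepsilon_1)$ a point or even $\varepsilon_1=\pi_5(\varepsilon_1)$; the caveat you flag is thus not a cosmetic issue but breaks the disjointness your construction of $\varepsilon_2=D\vee w$ presupposes. (As a smaller slip, $\dim\bigl(D\vee\pi_5(\varepsilon_1)\bigr)=4$ when the two are disjoint, not $5$, so the planes $D\vee x$ sweep out only a hyperplane; and the appeal to Grassmannian dimension is, as you note, unavailable over an arbitrary field.)

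For comparison, the paper's proof avoids all of this by not trying to build $\varepsilon_2$ through a prescribed line of $\varepsilon_1$. It takes a tangent line $T$ of $H_5$, notes that the existence of $\varepsilon_1$ forces $\KK$ to be infinite (finite fields admit no external planes), and hence finds a point $q\in T$ avoiding $H_5\cup\varepsilon_1\cup\pi_5(\varepsilon_1)$, since $T$ meets that set in at most three points. The point $q\notin H_5$ is the centre of an involutorial perspectivity $\sigma$ with axis $\pi_5(q)$ stabilising $H_5$ (this works uniformly in every characteristic); because $q\notin\varepsilon_1$ and $q\notin\pi_5(\varepsilon_1)$, the plane $\varepsilon_1$ is neither fixed pointwise nor mapped to itself, so $\varepsilon_2:=\sigma(\varepsilon_1)$ is an external plane with $\varepsilon_1\cap\varepsilon_2=\varepsilon_1\cap\pi_5(q)$ a line. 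If you want to salvage your approach, you would need a characteristic-free existence argument for the point $w$; the homology trick is precisely the device that supplies such an external plane without any case distinction.
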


\begin{proof}
There is a $1$-secant (tangent) $T$ of $H_5$. This $T$ is not contained in any
external plane to $H_5$. By Lemma~\ref{lem:extern}, the plane
$\pi_5(\varepsilon_1)$ is also external to $H_5$. So,
\begin{equation}\label{eq:max3}
   \left| T\cap \big( H_5\cup\varepsilon_1\cup\pi_5(\varepsilon_1) \big) \right| \leq 3 .
\end{equation}
The existence of an external plane to $H_5$ is guaranteed by $\varepsilon_1$
and forces $\KK$ to be an infinite field; cf.\ the classification quadrics in
$\PG(2,\KK)$, $\KK$ finite, \cite[p.~2]{hirsch+t-16a}. Therefore and by
\eqref{eq:max3}, there is a point $q\in T$ that is off the set
$H_5\cup\varepsilon_1\cup\pi_5(\varepsilon_1)$. This $q$ is the centre of a
perspectivity $\sigma$ of order two that stabilises $H_5$; the axis of $\sigma$
is the hyperplane $\pi_5(q)$. We infer from $q\notin \varepsilon_1$ that
$\varepsilon_1$ does not contain the centre of $\sigma$ and from
$q\notin\pi_5(\varepsilon_1)$ that $\varepsilon_1$ is not contained in the axis
of $\sigma$. Hence $\varepsilon_1\neq\sigma(\varepsilon_1)$ and so
$\varepsilon_1\cap\pi_5(q)=\sigma(\varepsilon_1)\cap\pi_5(q)=\varepsilon_1\cap\sigma(\varepsilon_1)$
is a line, that is, $\varepsilon_2:=\sigma(\varepsilon_1)$ has the required
properties.
\end{proof}

\begin{proof}[Proof of Theorem~\emph{\ref{thm:Existenz}}]
\ref{thm:Existenz.a}~$\Rightarrow$~\ref{thm:Existenz.b}. Let $\Pbf$ be a
pencilled regular parallelism of $\PR3$ that is not Clifford. We denote the
corresponding pencilled hfd line set $\gamma(\Pbf)$ by $\Hcal$ and adopt the
terminology of the Main Theorem~\ref{thm:main}. So, there is a plane
$\kappa_1\in\Kcal$ and this $\kappa_1$ is external to $H_5$. (There is more
than one plane in $\Kcal$, but this fact will not be used.) We now choose some
line $D\subset\kappa_1$ and observe $\kappa_1\in\Ecal_D$. We therefore can
carry out the construction of Theorem~\ref{thm:konstr} using the constant
mapping $f\colon D\to \Ecal_D \colon v\mapsto\kappa_1$; cf.\
Example~\ref{exa:Clifford}. This gives an hfd line set $\Hcal_1$ that equals
the set of lines in $\kappa_1$. Proposition~\ref{prop:const} yields that the
parallelism $\gamma^{-1}(\Hcal_1)$ is Clifford.
\par
\ref{thm:Existenz.b}~$\Rightarrow$~\ref{thm:Existenz.a}. Let $\Pbf$ be a
Clifford parallelism of $\PR3$. By Proposition~\ref{prop:const}, $\gamma(\Pbf)$
is the set of all lines in an external plane to $H_5$, say $\kappa_1$. Next, we
apply Lemma~\ref{lem:1mach2} and obtain a plane $\kappa_2$ that is external to
$H_5$ and such that $\kappa_1\cap\kappa_2$ is a line. This in turn allows us to
proceed as in Example~\ref{exa:2Ebenen} in order to obtain a pencilled hfd line
set $\Hcal_{12}$ other than a plane of lines. According to
Proposition~\ref{prop:const}, $\gamma^{-1}(\Hcal_{12})$ is a pencilled regular
parallelism that is not Clifford .
\par
\ref{thm:Existenz.b}~$\Leftrightarrow$~\ref{thm:Existenz.c}. This follows from
\cite[Thm.~4.8]{havl-16a} and \cite[Thm.~5.1]{havl-16a}.
\end{proof}

\section{Back to $\PR3$}\label{se:descript}

Our first aim is to state several properties of the bijection
$\gamma^{-1}\colon\Zcal\to\Cbf$. From \eqref{eq:Passantenabb}, for any
$0$-secant $G$ of $H_5$ we obtain the regular spread $\gamma^{-1}(G)$ as
follows:
\begin{equation}\label{eq:Passantenabb_inv}
    G\stackrel{\pi_5}{\longmapsto}\pi_5(G)
     \stackrel{}{\longmapsto} \pi_5(G)\cap H_5=:Q_3(G)
     \stackrel{\lambda^{-1}}{\longrightarrow} \gamma^{-1}(G) .
\end{equation}
Here $\pi_5(G)$ is a solid and $Q_3(G)$ denotes an elliptic subquadric of
$H_5$. For any point $p\in\Pcal_5\setminus H_5$, we may proceed in the same
way. This yields
\begin{equation}\label{eq:Punktabb_inv}
    p\stackrel{\pi_5}{\longmapsto} \pi_5(p)
     \stackrel{}{\longmapsto} \pi_5(p)\cap H_5=:L_4(p)
     \stackrel{\lambda^{-1}}{\longrightarrow} \lambda^{-1}\bigr(L_4(p)\bigl)=: \Gcal(p) .
\end{equation}
The hyperplane $\pi_5(p)$ of $\PR5$ is not tangent to $H_5$. Thus $L_4(p)$ is a
Lie subquadric of $H_5$ and $\Gcal(p)$ is a general linear complex of lines in
$\PR3$. It is known that \eqref{eq:Punktabb_inv} defines a bijection of the set
$\Pcal_5\setminus H_5$ onto the set of all general linear complexes of lines in
$\PR3$.
\par
We continue with two definitions. A \emph{flock} of a Lie subquadric
$L_4\subset H_5$ is a partition of $L_4$ by (disjoint) elliptic subquadrics.
Such a flock is said to be \emph{linear} if the members of the flock span
solids that constitute a pencil in the ambient space of $L_4$. For our
purposes, it is enough to define a \emph{linear flock of a general linear
complex} $\Gcal\subset\Lcal_3$ as the preimage under the Klein correspondence
of a linear flock of the Lie subquadric $\lambda(\Gcal)\subset H_5$.
\par
Next, let $\varepsilon$ be an external plane to $H_5$ and let
$p\in\varepsilon$. Clearly, $\Lcal[p,\varepsilon]$ contains only $0$-secants of
$H_5$ and $p\notin H_5$. By Lemma~\ref{lem:extern}, the plane
$\pi_5(\varepsilon)$ is also external to $H_5$. The polarity $\pi_5$ takes the
pencil $\Lcal[p,\varepsilon]$ to a pencil of solids, namely the set of all
solids that contain $\pi_5(\varepsilon)$ and are contained in the hyperplane
$\pi_5(p)$. By the previous definition and \eqref{eq:Passantenabb_inv}, the set
\begin{equation*}
    \bigl\{Q_3(X)\mid X\in \Lcal[p,\varepsilon] \bigr\}
\end{equation*}
is a linear flock of the Lie subquadric $L_4(p)$. Application of $\lambda^{-1}$
yields a set of regular spreads:
\begin{equation}\label{eq:Komplexflock}
    \Fbf[p,\varepsilon] :=
    \bigl\{\lambda^{-1}\bigl( Q_3(X) \bigr) \mid X\in \Lcal[p,\varepsilon] \bigr\} .
\end{equation}
So, the set $\Fbf[p,\varepsilon]$ in \eqref{eq:Komplexflock} is a linear flock
of $\Gcal(p)$. It is straightforward to reverse our foregoing arguments. To sum
up, we have:
\begin{prop}\label{prop:bueschel}
Under the bijection $\gamma:\Cbf\to\Zcal$ from equation\/
\eqref{eq:Passantenabb}, the linear flocks of general linear complexes of lines
in\/ $\PR3$ are mapped to pencils of $0$-secants of the Klein quadric $H_5$
in\/ $\PR5$, and vice versa.
\end{prop}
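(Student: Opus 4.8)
The plan is to follow the chain of correspondences already assembled in the paragraphs preceding the statement and to check that each link is reversible. First I would record that a pencil of $0$-secants of $H_5$ is automatically of the shape $\Lcal[p,\varepsilon]$ with $\varepsilon$ external to $H_5$: if a point $x\neq p$ of the carrier plane lay on $H_5$, then the line $p\vee x$ would be a member of the pencil meeting $H_5$, a contradiction; conversely, for any external plane $\varepsilon$ and any $p\in\varepsilon$ every line of $\Lcal[p,\varepsilon]$ is a $0$-secant. Thus ``pencil of $0$-secants'' and ``pencil $\Lcal[p,\varepsilon]$ with $\varepsilon$ external'' name the same objects, and $p\notin H_5$ in either description.

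Next I would apply the polarity $\pi_5$. By Lemma~\ref{lem:extern} the plane $\pi_5(\varepsilon)$ is again external, and $\pi_5$ carries the line pencil $\Lcal[p,\varepsilon]$ onto the pencil of solids $\{S\mid \pi_5(\varepsilon)\subset S\subset\pi_5(p)\}$. Inside the solid $\pi_5(p)$ these solids cover the whole $4$-space (passing to the quotient by the common plane $\pi_5(\varepsilon)$ turns them into the points of a projective line) and meet pairwise exactly in $\pi_5(\varepsilon)$, which is disjoint from $H_5$. Intersecting with $H_5$ therefore shows that the elliptic subquadrics $Q_3(X)=\pi_5(X)\cap H_5$, for $X\in\Lcal[p,\varepsilon]$, are pairwise disjoint and cover the Lie subquadric $L_4(p)=\pi_5(p)\cap H_5$; since the solids spanning them form a pencil, this is by definition a linear flock of $L_4(p)$. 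Transporting it by $\lambda^{-1}$ gives exactly the set $\Fbf[p,\varepsilon]$ of \eqref{eq:Komplexflock}, which is a linear flock of the general linear complex $\Gcal(p)$. This establishes one direction.

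For the converse I would start from a linear flock of a general linear complex $\Gcal$. By definition it is the $\lambda^{-1}$-image of a linear flock of the Lie subquadric $L_4:=\lambda(\Gcal)$, and by the bijectivity noted after \eqref{eq:Punktabb_inv} there is a unique $p\notin H_5$ with $L_4=L_4(p)$ and $\Gcal=\Gcal(p)$. Each flock member is an elliptic subquadric $Q_3=S\cap H_5$, so $X:=\pi_5(S)$ is a $0$-secant; the solids $S\subset\spn(L_4)=\pi_5(p)$ form a pencil, hence share a common plane $\varepsilon_0$, and then $p\in X\subset\pi_5(\varepsilon_0)=:\varepsilon$ for every member. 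As the flock partitions $L_4(p)$, the lines $X$ run through all of $\Lcal[p,\varepsilon]$, which is external by the first step. Hence $\gamma^{-1}$ pulls the pencil of $0$-secants $\Lcal[p,\varepsilon]$ back precisely to the linear flock we started with, and the two constructions are mutually inverse.

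The main point requiring care will be the partition claim in the middle paragraph --- that the pencil of solids through $\pi_5(\varepsilon)$ inside $\pi_5(p)$ covers $\pi_5(p)$ and that the induced elliptic subquadrics are pairwise disjoint and exhaust $L_4(p)$ --- together with verifying, in the reverse direction, that one really recovers \emph{all} lines of $\Lcal[p,\varepsilon]$, so that the correspondence is a bijection and not merely a surjection.
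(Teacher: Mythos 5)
Your proof follows essentially the same route as the paper, which assembles precisely this chain (pencil $\Lcal[p,\varepsilon]$ with external carrier $\;\leftrightarrow\;$ pencil of solids through $\pi_5(\varepsilon)$ inside $\pi_5(p)$ $\;\leftrightarrow\;$ linear flock of $L_4(p)$, transported by $\lambda^{-1}$) in the paragraphs before the proposition and then declares the reverse direction straightforward. Your extra checks are correct and are exactly the details the paper leaves implicit — note only that the step ``the lines $X$ run through all of $\Lcal[p,\varepsilon]$'' needs, besides the partition property, the observations that the common plane of the flock solids is external to $H_5$ (two disjoint members already force this) and that every solid through it inside $\pi_5(p)$ meets $L_4(p)$ because $L_4(p)$ contains lines.
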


By the above, our definitions and results on hfd line sets in $\PR5$ are
readily translated to the language of line geometry in $\PR3$.
\par
For example, let us consider a pencilled hfd line set $\Hcal$ other than a
plane of lines. From Proposition~\ref{prop:const}, the pencilled regular
parallelism $\Pbf:=\gamma^{-1}(\Hcal)$ is not Clifford. Using the Main
Theorem~\ref{thm:main} and the notation from there, we obtain the following
description: The hfd line set $\Hcal$ contains the \emph{distinguished line}
$D=\Vcal$. From \eqref{eq:Punktabb_inv}, the range of points on $D$ yields
\begin{equation*}
    \{\Gcal(v) \mid v \in D \};
\end{equation*}
this is a \emph{distinguished pencil of general linear complexes} in $\PR3$
related with $\Pbf$. According to \eqref{eq:Komplexflock}, each of the pencils
$\Lcal[v,h(v)]$, $v\in D$, yields a linear flock $\Fbf[v,h(v)]$ of the general
linear complex $\Gcal(v)$. The \emph{distinguished parallel class}
$\gamma^{-1}(D)$ of $\Pbf$ is the only regular spread that belongs to all these
linear flocks. The special role of $\gamma^{-1}(D)$ is also illustrated by
\begin{equation*}
    \gamma^{-1}(D) = \bigcap_{v\in D} \Gcal(v) .
\end{equation*}
Finally, we translate \eqref{eq:Hdarst} and obtain $\Pbf = \bigcup_{v\in D}
\Fbf[v,h(v)]$.

\section{Aspects of characteristic two}\label{se:char=2}

In $\PR5$, let $\varepsilon$ be a fixed external plane to $H_5$ and let $G$ be
any line of $\varepsilon$. If $\Char\KK\neq 2$, then the polarity $\pi_5$ of
the Klein quadric is orthogonal, so that every external subspace to $H_5$ is
skew to its $\pi_5$-polar subspace. Indeed, any common point of these subspaces
would be on $H_5$. In particular, we always have
$\varepsilon\cap\pi_5(\varepsilon)=\emptyset$ and $G\cap\pi_5(G)=\emptyset$.
\par
On the other hand, let us now assume that $\Char\KK=2$. Here $\pi_5$ is a null
polarity and the situation is less uniform than before. For any subspace $S$ of
$\PR5$ the difference $\dim S - \dim\bigl(S\cap\pi_5(S)\bigr)$ is an even
number, since the rank of any alternating bilinear form (on some subspace of
$\KK^6$) is even. We therefore have to distinguish two cases.
\par
\emph{Case}~1. $\varepsilon \cap\pi_5(\varepsilon)$ is a point: Letting
$\{q\}:=\varepsilon\cap\pi_5(\varepsilon)$ it is straightforward to verify that
\begin{equation}\label{eq:bueschel}
   G\subset\pi_5(G) \;\Leftrightarrow\; G\in\Lcal[q,\varepsilon]
   \mbox{~~~and~~~}
   G\cap\pi_5(G)=\emptyset \;\Leftrightarrow\; G\notin\Lcal[q,\varepsilon] .
\end{equation}
Therefore $G$ may be contained in its polar solid or be skew to it.
\par
\emph{Case}~2. $\varepsilon = \pi_5(\varepsilon)$: Here we have
$G\subset\varepsilon=\pi_5(\varepsilon)\subset\pi_5(G)$.
\par
Thus, for $\Char\KK=2$, there may be \emph{two kinds of external plane to
$H_5$} and \emph{two kinds of $0$-secant of $H_5$}. As a further consequence,
we obtain:
\begin{prop}
In case of\/ $\Char\KK=2$, every pencil of an hfd line set contains at least
one line $N$ such that $N\subset\pi_5(N)$.
\end{prop}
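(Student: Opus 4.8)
The plan is to piggyback on the dichotomy for external planes established immediately above the proposition. Let $\Lcal[v,\kappa]$ be any pencil contained in an hfd line set $\Hcal$. First I would observe that its carrier plane $\kappa$ is external to $H_5$: two distinct lines of the pencil are coplanar members of $\Hcal$ whose span is $\kappa$, so Lemma~\ref{lem:C} gives $\kappa\cap H_5=\emptyset$. Since $\Char\KK=2$, the polarity $\pi_5$ is null, so by the parity remark in this section $\dim\kappa-\dim\bigl(\kappa\cap\pi_5(\kappa)\bigr)$ is even; as $\dim\kappa=2$, the intersection $\kappa\cap\pi_5(\kappa)$ must be either all of $\kappa$ or a single point, i.e.\ we are in Case~2 or Case~1 as discussed.

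In Case~2 we have $\kappa=\pi_5(\kappa)$, hence $N\subset\kappa=\pi_5(\kappa)\subset\pi_5(N)$ for \emph{every} line $N\subset\kappa$, and in particular for every line of the pencil. In Case~1 I would write $\{q\}:=\kappa\cap\pi_5(\kappa)$ and apply the equivalence \eqref{eq:bueschel}, which says that a line $G\subset\kappa$ satisfies $G\subset\pi_5(G)$ exactly when $G\in\Lcal[q,\kappa]$. It then suffices to produce one line of $\Lcal[v,\kappa]$ through $q$: if $v=q$ every line of the pencil works, and if $v\neq q$ the line $N:=v\vee q$ lies in $\kappa$, passes through $v$, and belongs to $\Lcal[q,\kappa]$, so $N\subset\pi_5(N)$ as required.

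I do not expect a genuine obstacle here; the statement is essentially a bookkeeping corollary of the case analysis already carried out in the text. The only points that need a moment's care are checking that $\kappa$ is external (so that the dichotomy is applicable at all) and treating the degenerate possibility $v=q$, both handled above.
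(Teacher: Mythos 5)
Your argument is correct and is precisely the route the paper intends: the proposition is stated there without a separate proof, as an immediate consequence of the preceding Case~1/Case~2 dichotomy for external planes, with the carrier plane known to be external (Lemma~\ref{lem:C}, or Theorem~\ref{thm:main}~\ref{thm:main.d}). Your write-up just makes explicit the two small checks (externality of $\kappa$ and the subcase $v=q$), so there is nothing to add.
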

If $N$ is given as above, then $\pi_5(N)\cap H_5$ is an elliptic subquadric of
$H_5$ and the line $N$ is its \emph{nucleus}; that is, all tangent planes of
$\pi_5(N)\cap H_5$ contain the line $N$.
\par
Next, we sketch, for any characteristic, an algebraic counterpart of the
foregoing. So, as before, $\varepsilon$ denotes a fixed external plane to $H_5$
and $G$ is any line of $\varepsilon$. By Proposition~\ref{prop:const}, the set
of all lines in $\varepsilon$ corresponds under $\gamma^{-1}$ to a Clifford
parallelism $\Pbf$ of $\PR3$. Hence $\Pbf$ can be described in terms of a
four-dimensional $\KK$-algebra $\HH$ subject to \eqref{eq:A+B}. We assume that
the parallel classes of $\Pbf$ are the classes of left parallel lines;
otherwise the order of factors in the subsequent formula \eqref{eq:linksklasse}
has to be altered.
\par
From now on we consider $\HH$ as the underlying vector space of $\PR3$. The
regular spread $\gamma^{-1}(G)\in\Pbf$ sends a unique line through that point
of $\PR3$ being spanned by the vector $1\in\HH$. This particular line
corresponds to a two-dimensional $\KK$-subspace $\LL_G$ of $\HH$, which
actually is a proper intermediate field of $\KK$ and $\HH$. In terms of the
$\KK$-vector space $\HH$, the regular spread $\gamma^{-1}(G)$ can be
represented as
\begin{equation}\label{eq:linksklasse}
    \big\{c\cdot\LL_G\mid c\in \HH\setminus\{0\}\big\} .
\end{equation}
This implies that $\gamma^{-1}(G)$ coincides with the spread that is associated
with the quadratic field extension $\LL_G / \KK$; see, for example,
\cite{havl-94b}. Hence we obtain for $\Char \KK\neq 2$: $\HH$ satisfies
condition (A) in \eqref{eq:A+B} and $\LL_G / \KK$ is Galois. Otherwise, one of
the following applies:
\par
\emph{Case}~1. $\Char\KK=2$ and $\HH$ satisfies (A): Here $\HH$ is a quaternion
skew field. Some proper intermediate fields of $\KK$ and $\HH$ are Galois
extensions of $\KK$, while others are not. (A characterisation of these
intermediate fields among all quadratic extension fields of $\KK$ can be found
in \cite[Thm.~2.2]{blunck+p+p-10a}.) Thus, $\LL_G/\KK$ may be Galois or not.
\par
\emph{Case}~2. $\Char\KK=2$ and $\HH$ satisfies (B): Here all proper
intermediate fields of $\KK$ and $\HH$ are inseparable over $\KK$. Therefore
$\LL_G / \KK$ is not Galois.
\par
The announced connection with our previous discussion is as follows: From
\cite[Lemma~1]{havl-94b}, $\LL_G / \KK$ is Galois precisely when the
intersection of all tangent planes of the subquadric $Q_3(G)= \pi_5(G)\cap H_5$
is empty; this in turn is equivalent to $G\cap\pi_5(G)=\emptyset$. Therefore,
for $\Char\KK=2$ only, $\HH$ satisfies (A) if, and only if,
$\varepsilon\cap\pi_5(\varepsilon)$ is point, whereas (B) means
$\varepsilon=\pi_5(\varepsilon)$.
\par
Finally, it is straightforward to reverse our arguments for any characteristic.
As $\varepsilon$ varies in the set of all planes of $\PR3$ that are external to
$H_5$, we obtain (up to $\KK$-linear isomorphisms) all $\KK$-algebras $\HH$
subject to \eqref{eq:A+B}. Furthermore, in any such algebra $\HH$ the proper
intermediate fields of $\KK$ and $\HH$ are precisely the two-dimensional
$\KK$-subspaces of $\HH$ that contain $1\in\HH$.

% BibTeX users please use one of
%\bibliographystyle{spbasic}      % basic style, author-year citations
%\bibliographystyle{spmpsci}      % mathematics and physical sciences
%\bibliographystyle{spphys}       % APS-like style for physics
%\bibliography{}   % name your BibTeX data base

%%%%%%%%%%%%%%%%%%%%%% BIBTEX ACTIVE %%%%%%%%%%
%\bibliographystyle{advgeom} %% plain {unsrt}
%\bibliography{ketten}
%%%%%%%%%%%%%%%%%%%%%%%%%%%%%%%%%%%%%%%%%%%%%%%
%% advgeom macht Fehler.
%% manuell verbessert: fehlende Tilden und Komma statt Punkt
\newcommand{\Dbar}{\makebox[0cm][c]{\hspace{-2.5ex}\raisebox{0.25ex}{-}}}
  \newcommand{\cprime}{$'$}

%%%%%%%%%%%%%%%%%%%%%%%%%%%%%%%%%%%%%%%%%%%%%%%
\noindent Hans Havlicek\\
Institut f\"{u}r Diskrete Mathematik und Geometrie\\
Technische Universit\"{a}t\\
Wiedner Hauptstra{\ss}e 8--10/104\\
A--1040 Wien\\
Austria\\
Email: havlicek@geometrie.tuwien.ac.at

\bigskip

\noindent Rolf Riesinger\\
Patrizigasse 7/14\\
A--1210 Wien\\
Austria\\
Email: rolf.riesinger@chello.at

\end{document}